\theoremstyle{plain}
\newtheorem{thm}{Theorem}[section]
\newtheorem{definition}[thm]{Definition}
\newtheorem{example}[thm]{Example}
\newtheorem{lemma}[thm]{Lemma}
\newtheorem{proposition}[thm]{Proposition}
\newtheorem{remark}[thm]{Remark}
\newtheorem{theorem}[thm]{Theorem}
\numberwithin{equation}{section}
\newcommand{\N}{\mathbb{N}}
\newcommand{\R}{\mathbb{R}}
\newcommand{\Rn}{\mathbb{R}^n}
\DeclareMathOperator{\lip}{Lip\,\!}
\begin{document}

\title[Explicit formulas for $C^{1,1}$ extensions of $1$-jets]{Explicit formulas for $C^{1, 1}$ and $C^{1, \omega}_{\textrm{conv}}$ extensions of $1$-jets in Hilbert and superreflexive spaces}
\author{D. Azagra}
\address{ICMAT (CSIC-UAM-UC3-UCM), Departamento de An{\'a}lisis Matem{\'a}tico,
Facultad Ciencias Matem{\'a}ticas, Universidad Complutense, 28040, Madrid, Spain }
\email{azagra@mat.ucm.es}

\author{E. Le Gruyer}
\address{INSA de Rennes \& IRMAR, 20, Avenue des Buttes de Co\"esmes, CS 70839
F-35708, Rennes Cedex 7, France}
\email{Erwan.Le-Gruyer@insa-rennes.fr}

\author{C. Mudarra}
\address{ICMAT (CSIC-UAM-UC3-UCM), Calle Nicol\'as Cabrera 13-15.
28049 Madrid, Spain}
\email{carlos.mudarra@icmat.es}

\date{May 26, 2017}

\keywords{convex function, $C^{1,\omega}$ function, Whitney extension theorem}

\thanks{D. Azagra was partially supported by Ministerio de Educaci\'on, Cultura y Deporte, Programa Estatal de Promoci\'on del Talento y su Empleabilidad en I+D+i, Subprograma Estatal de Movilidad. C. Mudarra was supported by Programa Internacional de Doctorado Fundaci\'on La Caixa--Severo Ochoa. Both authors partially suported by grant MTM2015-65825-P}

\subjclass[2010]{54C20, 52A41, 26B05, 53A99, 53C45, 52A20, 58C25, 35J96}

\begin{abstract}
Given $X$ a Hilbert space, $\omega$ a modulus of continuity, $E$ an arbitrary subset of $X$, and functions $f:E\to\R$, $G:E\to X$, we provide necessary and sufficient conditions for the jet $(f,G)$ to admit an extension $(F, \nabla F)$ with $F:X\to\R$ convex and of class $C^{1, \omega}(X)$, by means of a simple explicit formula. As a consequence of this result, if $\omega$ is linear, we show that a variant of this formula provides explicit $C^{1,1}$ extensions of general (not necessarily {\em convex}) $1$-jets satisfying the usual Whitney extension condition, with best possible Lipschitz constants of the gradients of the extensions. Finally, if $X$ is a superreflexive Banach space, we establish similar results for the classes $C^{1, \alpha}_{\textrm{conv}}(X)$.
\end{abstract}

\maketitle

\section{Introduction and main results}

If $C$ is a subset of $\R^n$ and we are given functions $f:C\to\R$, $G:C\to\R^n$, the $C^{1,1}$ version of the classical Whitney extension theorem (see \cite{Whitney, Glaeser, Stein} for instance) theorem tells us that there exists a function $F\in C^{1,1}(\R^n)$ with $F=f$ on $C$ and $\nabla F=G$ on $C$ if and only if the $1$-jet $(f,G)$ satisfies the following property: there exists a constant $M>0$ such that
$$
|f(x)-f(y)-\langle G(y), x-y\rangle|\leq M |x-y|^2, \,\,\, \textrm{ and } \,\,\,
|G(x)-G(y)|\leq M|x-y| \eqno(\widetilde{W^{1,1}})
$$
for all $x, y\in C.$ We can trivially extend $(f,G)$ to the closure $\overline{C}$ of $C$ so that the inequalities $( \widetilde{W^{1,1}})$ hold on $\overline{C}$ with the same constant $M.$ The function $F$ can be explicitly defined by
$$
F(x)=\begin{cases}
f(x) & \mbox{ if } x\in \overline{C} \\
\sum_{Q\in\mathcal{Q}}\left( f(x_Q)+\langle G(x_Q), x-x_Q\rangle\right)\varphi_{Q}(x)  & \mbox{ if }  x\in\R^n\setminus \overline{C},
\end{cases}
$$
where $\mathcal{Q}$ is a family of {\em Whitney cubes} that cover the complement of the closure $\overline{C}$ of $C, \{\varphi_{Q}\}_{Q\in\mathcal{Q}}$ is the usual Whitney partition of unity associated to $\mathcal{Q}$, and $x_Q$ is a point of $\overline{C}$ which minimizes the distance of $\overline{C}$ to the cube $Q.$ Recall also that the function $F$ constructed in this way has the property that $\lip(\nabla F) \leq k(n)M,$ where $k(n)$ is a constant depending only on $n$ (but going to infinity as $n\to\infty$), and $\lip(\nabla F)$ denotes the Lipschitz constant of the gradient $\nabla F$.

In \cite{Wells, LeGruyer1} it was shown, by very different means, that this $C^{1,1}$ version of the Whitney extension theorem holds true if we replace $\R^n$ with any Hilbert space and, moreover, there is an extension operator $(f,G)\mapsto (F, \nabla F)$ which is minimal, in the following sense. Given a Hilbert space $X$  with norm denoted by $\|\cdot\|$, a subset $E$ of $X$, and functions $f:E\to\R$, $G:E\to X$, a necessary and sufficient condition for the $1$-jet $(f,G)$ to have a $C^{1,1}$ extension $(F, \nabla F)$ to the whole space $X$ is that
\begin{equation}\label{LeGruyersCondition}
\Gamma(f,G,E):=\sup_{x,y\in E} \left( \sqrt{A_{x,y}^2+B_{x,y}^2} + |A_{x,y}| \right) < \infty,
\end{equation}
where
$$
A_{x,y} = \frac{2(f(x)-f(y))+\langle G(x)+G(y),y-x \rangle}{\| x-y\|^2} \quad \text{and}
$$
$$
 B_{x,y} = \frac{\|G(x)-G(y)\|}{\|x-y\|} \quad \text{for all} \quad x,y\in E, x\neq y.
$$
Moreover, the extension $(F, \nabla F)$ can be taken with best Lipschitz constants, in the sense that
$$
\Gamma(F, \nabla F, X) = \Gamma(f,G,E) = \|(f,G)\|_{E},
$$
where
$$
\|(f,G)\|_{E}:= \inf \lbrace \lip(\nabla H) \: : \: H \in C^{1,1}(X) \:\: \text{and} \:\: (H,\nabla H) = (f,G) \:\: \text{on} \:\: E \rbrace
$$
is the trace seminorm of the jet $(f,G)$ on $E$; see \cite{LeGruyer1} and \cite[Lemma 15]{LeGruyer2}.

While the operators $(f,G)\mapsto (F, \nabla F)$ given by the constructions in \cite{Wells, LeGruyer1, LeGruyer2} are not linear, they have the useful property that, when we put them to work on $X=\R^n$, they satisfy $\lip(\nabla F)\leq\eta \|(f,G)\|_{E}$ for some $\eta>0$ independent of $n$ (in fact for $\eta=1$); hence one can say that they are bounded, with norms independent of the dimension $n$, provided that we endow $C^{1,1}(X)$ with the seminorm given by $\lip (\nabla F)$ and equip the space of jets $(f,G)$ with the  trace seminorm $\|(f,G)\|_E$. In contrast, the Whitney extension operator is linear and also bounded in this sense, but with norm going to $\infty$ as $n\to\infty$).
On the negative side, the formulas in \cite{LeGruyer2} depending on Wells's construction are more complicated than the proof of \cite{LeGruyer1}, which uses Zorn's lemma and in particular is not constructive. 
For more information about Whitney extension problems and extension operators see \cite{BrudnyiShvartsman, Fefferman2005, Fefferman2006, FeffermanSurvey, FeffermanIsraelLuli1, FeffermanIsraelLuli2, Glaeser, VossHirnMcCollum, JSSG, LeGruyer2, DacorognaGangbo, Valentine} and the references therein.

In this paper, among other things, we will remedy those two drawbacks by providing a very simple, explicit formula for $C^{1,1}$ extension of jets in Hilbert spaces: let us say that a jet $(f,G)$ on $E\subset X$ satisfies condition $(W^{1,1})$ provided that there exists a number $M>0$ such that
$$
f(y) \leq f(x) + \frac{1}{2} \langle G(x)+G(y), y-x \rangle + \frac{M}{4} \|x-y\|^2 - \frac{1}{4M} \|G(x)-G(y)\|^2 \eqno(W^{1,1})
$$
for all $x,y\in E$. This condition is equal to Wells's necessary and sufficient condition in \cite[Theorem 2]{Wells}. Also, it is easy to check that this condition is absolutely equivalent to $(\widetilde{W^{1,1}}),$ meaning that if $(W^{1,1})$ is satisfied with some constant $M>0,$ then $(\widetilde{W^{1,1}})$ is satisfied with constant $ k M,$ (where $k$ is an absolute constant independent of the space $X$; in particular $k$ does not depend on the dimension of $X$), and viceversa. Moreover, $(W^{1,1})$ is also absolutely equivalent to \eqref{LeGruyersCondition} and, in fact, the number $\Gamma(f,G,E)$ is the smallest $M>0$ for which $(f,G)$ satisfies $(W^{1,1})$ with constant $M>0$; see \cite[Lemma 15]{LeGruyer2}.

In Theorem \ref{theoremformulaC11nonnecessaryconvex} below we will show that, for every $(f,G)$ defined on $E$ and satisfying the property $(W^{1,1})$ with constant $M$ on $E,$ the formula
\begin{align}\label{formula for C11 extension}
& F=\textrm{conv}(g)- \tfrac{M}{2}\| \cdot \|^2, \, \textrm{ where } \\
& g(x) = \inf_{y \in E} \lbrace f(y)+\langle G(y), x-y \rangle + \tfrac{M}{2} \|x-y\|^2 \rbrace + \tfrac{M}{2}\|x\|^2 , \quad x\in X,
\end{align}
defines a $C^{1,1}(X)$ function with $F=f$ and $\nabla F =G$ on $E$ and $\lip(\nabla F) \leq M.$ 
Here $\textrm{conv}(g)$ denotes the convex envelope of $g$, defined by
$$
\textrm{conv}(g)(x)=\sup\{ h(x) \, : \, h \textrm{ is convex, proper and lower semicontinuous }, h\leq g\}.
$$
Another expression for $\textrm{conv}(g)$ is given by
$$
\textrm{conv}(g)(x)=\inf\left\lbrace \sum_{j=1}^{k}\lambda_{j} g(x_j) \, : \, \lambda_j\geq 0,
\sum_{j=1}^{k}\lambda_j =1, \, x=\sum_{j=1}^{k}\lambda_j x_j, \, k\in\N \right\rbrace.
$$
In the case that $X$ is finite dimensional, say $X=\R^n$, this expression can be made simpler: by using Carath\'eodory's Theorem one can show that it is enough to consider convex combinations of at most $n+1$ points. That is to say, if $g:\R^n\to\R$ then
$$
\textrm{conv}(g)(x)=\inf\left\lbrace \sum_{j=1}^{n+1}\lambda_{j} g(x_j) \, : \, \lambda_j\geq 0,
\sum_{j=1}^{n+1}\lambda_j =1, \, x=\sum_{j=1}^{n+1}\lambda_j x_j \right\rbrace;
$$
see \cite[Corollary 17.1.5]{Rockafellar} for instance.

Let us informally explain the reasons why formula \eqref{formula for C11 extension} does its job. It is well known that a function $F:X \to\R$ is of class $C^{1,1}$, with $\textrm{Lip}(\nabla F)=M$, if and only if $F+\frac{M}{2}\|\cdot\|^2$ is convex and $F-\frac{M}{2}\|\cdot\|^2$ is concave. So, if we are given a $1$-jet $(f,G)$ defined on $E\subset X$ which can be extended to $(F, \nabla F)$ with $F\in C^{1,1}(X)$ and $\textrm{Lip}(\nabla F)\leq M$, then the function $H=F+\frac{M}{2}\|\cdot\|^2$ will be convex and of class $C^{1,1}$. Conversely, if we can find a convex and $C^{1,1}$ function $H$ such that $(H,\nabla H)$ is an extension of the jet $E \ni y \mapsto \left( f(y)+\frac{M}{2}\|y\|^2, G(y) + M y \right),$ then $X \ni y \mapsto \left( H(y)-\frac{M}{2}\|y\|^2, \nabla H(y)-M y \right)$ will be a $C^{1,1}$ extension of $(f,G).$
Thus we can reduce the $C^{1,1}$ extension problem for jets to the $C^{1,1}_{\textrm{conv}}$ extension problem for jets. Here, as in the rest of the paper, $C^{1,1}_{\textrm{conv}}(X)$ will stand for the set of all convex functions $\varphi:X\to\R$ of class $C^{1,1}$.

Now, how can we solve the $C^{1,1}_{\textrm{conv}}$ extension problem for jets? In \cite{AM} the following necessary and sufficient condition for $C^{1,1}_{\textrm{conv}}$ extension of jets was given: for any $E\subset \R^n$, $f: E \to \R, \: G : E \to X,$ we say that $(f,G)$ satisfies condition $(CW^{1,1})$ on $E$ with constant $M>0,$ provided that
$$
f(x) \geq f(y)+ \langle G(y), x-y \rangle + \frac{1}{2M} \| G(x)-G(y)\|^2 \quad \text{for all} \quad x,y\in E. 
$$
In \cite{AM} it is shown that a jet $(f, G)$ has an extension $(F, \nabla F)$ with $F\in C^{1,1}_{\textrm{conv}}$ if and only if $(f,G)$ satisfies $(CW^{1,1})$; moreover in this case one can take $F\in C^{1,1}_{\textrm{conv}}$ such that $\textrm{Lip}(\nabla F)\leq k(n)M$, where $k(n)$ is a constant only depending on $n$. The construction in \cite{AM} is explicit, but has the same disadvantage as the Whitney extension operator has, namely that $\lim_{n\to\infty}k(n)=\infty$. In \cite{AMHilbert} this result is extended to the Hilbert space setting, but the proof, inspired by \cite{LeGruyer1}, is not constructive. However, by following the ideas of the proof of \cite{AM}, but using a simple formula instead of the Whitney extension theorem, we will show in Theorem  \ref{theoremformulaC11convex} below that if a $1$-jet $(f,G)$ defined on a subset $E$ of a Hilbert space satisfies condition $(CW^{1,1})$ then the function $F$ defined by
$$
F=\textrm{conv}(g), \quad \textrm{where} \quad g(x) = \inf_{y \in E} \lbrace f(y)+\langle G(y), x-y \rangle + \tfrac{M}{2} \|x-y\|^2 \rbrace, \quad x\in X,
$$
is a $C^{1,1}$ convex function such that $F_{|_E}=f$, $(\nabla F)_{|_E}=G$, and $\lip(\nabla F) \leq M$. Moreover, if $H$ is another $C^{1,1}$ convex function with $H=f$ and $\nabla H=G$ on $E$ and $\lip(\nabla H) \leq M,$ then $H \leq F$. This strategy allows us to solve the $C^{1,1}_{\textrm{conv}}$ extension problem for jets with best constants and, after checking that if $(f, G)$ satisfies $(W^{1,1})$ then $\left(f(y)+\frac{M}{2}\|y\|^2, G(y)+M y \right)$ satisfies $(CW^{1,1})$, also allows us to show that the expression
$$
F(x)=\textrm{conv}\left( z\mapsto\inf_{y\in E}\{f(y)+\tfrac{M}{2}\|y\|^2+\langle G(y)+My, z-y\rangle +M\|z-y\|^2\}\right) (x) -\tfrac{M}{2}\|x\|^2,
$$
which is easily seen to be equal to 
\eqref{formula for C11 extension}, provides an extension formula that solves the minimal $C^{1,1}$ extension problem for jets, in the sense that $\textrm{Lip}(\nabla F)\leq M$. Besides we will also prove that if $H$ is another $C^{1,1}$ function with $H=f$ and $\nabla H=G$ on $E$ and $\lip(\nabla H) \leq M$, then $H \leq F$. Since the extension of $(f,G)$ constructed by Wells in \cite{Wells} also has this property, it follows that in fact \eqref{formula for C11 extension} coincides with Wells's extension. The point is of course that both our formula \eqref{formula for C11 extension} and the proof that it works are much simpler than Wells's construction and proof.

Moreover, the latent potential in this kind of formula, at least in the convex case, is not confined to $C^{1,1}$ extension problems in Hilbert spaces. Indeed, on the one hand, by means of a similar formula, we will show in Theorem \ref{theoremformulac1omegaconvex} below that, if $X$ is a Hilbert space and $\omega$ is a concave, strictly increasing, modulus of continuity, with $\omega(\infty)=\infty$, then the condition $(CW^{1, \omega})$ of \cite{AM} is necessary and sufficient for a $1$-jet $(f,G)$ defined on a subset $E$ of a Hilbert space to have an extension $(F, \nabla F)$ such that $F:X\to\R$ is convex and of class $C^{1, \omega}$, with 
$$
\sup_{x,y\in X, \, x\neq y}\frac{\|\nabla F(x)-\nabla F(y)\|}{\omega(\|x-y\|)}  \leq 8 M.
$$
Not only does this provide a new result\footnote{Of course, Theorem \ref{theoremformulac1omegaconvex} is essentially much more general than Theorem \ref{theoremformulaC11convex}, but we deliberately present these two results in two different sections of this paper, for the following two reasons. 1) In Theorem \ref{theoremformulaC11convex} we are able to obtain best possible Lipschitz constants of the gradients of the extension, whereas in Theorem \ref{theoremformulac1omegaconvex} we only get them up to a factor 8. 2) The proof of Theorem \ref{theoremformulac1omegaconvex} is more technical and uses some machinery from Convex Analysis, such as Fenchel conjugates, smoothness and convexity moduli, etc, which could obscure the main ideas and prevent some readers interested only in the proofs of the $C^{1,1}$ results from easily understanding them.} for the infinite-dimensional case, but also shows that the constants $k$ can be supposed to be independent of the dimension $n$ in \cite
 [Theorem 1.4]{AM}, at least if $\omega(\infty)=\infty$ (and in particular for all of the classes $C^{1, \alpha}_{\textrm{conv}}(\R^n)$). On the other hand,  we will see in Section 5 that one can even go beyond the Hilbertian case and show that a similar result holds for the class $C^{1, \alpha}_{\textrm{conv}}(X)$ whenever $(X, \|\cdot\|)$ is a superreflexive Banach space whose norm $\|\cdot\|$ has modulus of smoothness of power type $1+\alpha,$ with $\alpha\in (0,1]$; this is the content of Theorem \ref{theoremformulac1alphaconvex} below. Finally, in Section 6 we give an example showing that all of the above results fail in the Banach space $c_0$.

Unfortunately, it seems very unlikely that one could use this kind of formulas to solve $C^{1, \alpha}$ extension problems for general (not necessarily convex) $1$-jets in superreflexive\footnote{It is well known that superreflexive Banach spaces are characterized as being Banach spaces with equivalent norms of class $C^{1, \alpha}$ for some $\alpha\in (0, 1]$, and Hilbert spaces are characterized as being Banach spaces with equivalent norms of class $C^{1,1}$. For general reference about renorming properties of superreflexive spaces see, for instance \cite{DGZ, FabianEtAl}.} Banach spaces with $C^{1, \alpha}$ equivalent norms if $\alpha\neq 1$. The exponent $\alpha=1$ is somewhat miraculous in this respect: even for the simplest case that $X=\R$, it is not true in general that, given a function $f\in C^{1, \alpha}(\R)$, there exists a constant $C$ such that $f+C|\cdot|^{1+\alpha}$ is convex.  

\medskip

When the first version of this paper was completed, a preprint of A. Daniilidis, M. Haddou, E. Le Gruyer and O. Ley \cite{DaniilidisHaddouLeGruyerLey} concerning the same problem in Hilbert spaces was made public. The formula for $C^{1,1}_{\textrm{conv}}$ extension of $1$-jets given by \cite{DaniilidisHaddouLeGruyerLey} is different from the formula we provide in this paper. As these authors show, their formula cannot work for the H\"older differentiability classes $C^{1, \alpha}_{\textrm{conv}}$ with $\alpha\neq 1$. Two advantages of the present approach are the fact that our formula does work for theses classes, and its simplicity.

\section{Optimal $C^{1,1}$ convex extensions of $1$-jets by explicit formulas in Hilbert spaces}\label{sectionc11convex}

Given an arbitrary subset $E$ of $X,$ and a $1$-jet $f: E \to \R, \: G : E \to X,$ we will say that $(f,G)$ satisfies the condition $(CW^{1,1})$ on $E$ with constant $M>0,$ provided that
$$
f(x) \geq f(y)+ \langle G(y), x-y \rangle + \frac{1}{2M} \| G(x)-G(y)\|^2 \quad \text{for all} \quad x,y\in E.
$$
The following Proposition shows that this condition is necessary for a $1$-jet to have a $C^{1,1}$ convex extension to all of $X$.
\begin{proposition}\label{necessityconditioncw11}
Let $f\in C^{1,1}(X)$ be convex, and assume that $f$ is not affine. Then
$$
f(x)-f(y)-\langle \nabla f(y), x-y\rangle \geq \frac{1}{2M} \|\nabla f(x)-\nabla f(y)\|^2
$$
for all $x, y\in X$, where 
$$
M=\sup_{x, y\in X, \, x\neq y}\frac{\|\nabla f(x)-\nabla f(y)\|}{\|x-y\|}.
$$
\end{proposition}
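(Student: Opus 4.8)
The plan is to reduce the claimed inequality to a well-known estimate: any nonnegative $C^{1,1}$ function $\phi$ whose gradient is $M$-Lipschitz satisfies $\|\nabla\phi(z)\|^2 \le 2M\phi(z)$ at every point. First I would record that $M$ is finite, since $f\in C^{1,1}(X)$ means precisely that $\nabla f$ is Lipschitz, and that $M>0$, since $M=0$ would force $\nabla f$ to be constant and hence $f$ to be affine, contrary to hypothesis. Then, fixing $y\in X$, I would introduce
\[
\phi(z)=f(z)-f(y)-\langle \nabla f(y),z-y\rangle,\qquad z\in X.
\]
Because $f$ is convex, the supporting-hyperplane (gradient) inequality gives $\phi\ge 0$ on all of $X$, with $\phi(y)=0$; moreover $\phi\in C^{1,1}(X)$, $\nabla\phi(z)=\nabla f(z)-\nabla f(y)$, and $\lip(\nabla\phi)=\lip(\nabla f)\le M$. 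The target inequality is exactly $\tfrac{1}{2M}\|\nabla\phi(x)\|^2\le \phi(x)$, so it suffices to prove the auxiliary estimate for $\phi$ and then evaluate at $z=x$.

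Next I would establish the descent lemma for $\phi$ in the (possibly infinite-dimensional) Hilbert setting by the fundamental theorem of calculus along the segment joining $z$ and $w$: writing
\[
\phi(w)-\phi(z)-\langle\nabla\phi(z),w-z\rangle=\int_0^1\langle\nabla\phi(z+t(w-z))-\nabla\phi(z),\,w-z\rangle\,dt,
\]
and bounding the integrand by $Mt\|w-z\|^2$ via the Lipschitz estimate on $\nabla\phi$, yields
\[
\phi(w)\le \phi(z)+\langle\nabla\phi(z),w-z\rangle+\tfrac{M}{2}\|w-z\|^2.
\]
Now I would exploit global nonnegativity: choosing the descent point $w=z-\tfrac{1}{M}\nabla\phi(z)$ and using $\phi(w)\ge0$ gives
\[
0\le \phi(z)-\tfrac{1}{M}\|\nabla\phi(z)\|^2+\tfrac{1}{2M}\|\nabla\phi(z)\|^2=\phi(z)-\tfrac{1}{2M}\|\nabla\phi(z)\|^2,
\]
which is precisely $\tfrac{1}{2M}\|\nabla\phi(z)\|^2\le\phi(z)$. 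Setting $z=x$ and recalling that $\nabla\phi(x)=\nabla f(x)-\nabla f(y)$ and $\phi(x)=f(x)-f(y)-\langle\nabla f(y),x-y\rangle$ completes the argument.

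I do not expect a genuine obstacle here; the only points requiring a little care are that the descent lemma must be justified in an infinite-dimensional Hilbert space (handled cleanly by the integral identity above, which needs only $C^1$ smoothness together with the Lipschitz bound on the gradient, and no compactness or finite-dimensionality), and that the step size $1/M$ is exactly the minimizer of the quadratic upper bound, so that the global lower bound $\phi\ge0$ is converted into the sharp constant $1/(2M)$. The hypothesis that $f$ is not affine enters only to guarantee $M>0$, so that division by $M$ is legitimate.
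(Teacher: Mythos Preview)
Your proof is correct and follows essentially the same approach as the paper: the paper refers to its Proposition~\ref{necessityconditioncw1omega} (the general $C^{1,\omega}$ version) for the argument, and there, after normalizing so that $f(y)=0$, $\nabla f(y)=0$, it too moves from $x$ in the direction $-\nabla f(x)/\|\nabla f(x)\|$ by the optimal step length, applies the Taylor/descent bound, and invokes nonnegativity (from convexity) to reach the conclusion. The only cosmetic difference is that the paper frames this as a proof by contradiction split into three reduction cases, whereas you package the same idea as a direct argument via the auxiliary function $\phi$; the mathematical content is identical.
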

\noindent On the other hand, if $f$ is affine, it is obvious that $(f, \nabla f)$ satisfies $(CW^{1,1})$ on every $E\subset X$, for every $M>0$. 

For a proof of the above Proposition, see \cite[Proposition 2.1]{AMHilbert}, or Proposition \ref{necessityconditioncw11} below in a more general form.

We will need to use the following characterization of $C^{1,1}$ differentiability of convex functions. Of course the result is well known, but we will provide a short proof for completeness, and also in order to remark that the implication $(ii)\implies (i)$ is true for not necessarily convex functions as well, a fact that we will have to use later on.

\begin{proposition}\label{characterizationdifferentiability}
For a continuous convex function $f: X \to \R,$ the following statements are equivalent.
\item[$(i)$] There exists $M>0$ such that
$$
f(x+h)+f(x-h)-2f(x) \leq M \|h\|^2 \quad \text{for all} \quad x,h \in X.
$$
\item[$(ii)$] $f$ is differentiable on $X$ with $\lip( \nabla f ) \leq M.$
\end{proposition}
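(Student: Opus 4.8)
The plan is to prove the two implications separately, treating $(ii)\Rightarrow(i)$ as routine and concentrating effort on $(i)\Rightarrow(ii)$, where the real content is obtaining the \emph{sharp} constant $M$.

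For $(ii)\Rightarrow(i)$, I would write the symmetric second difference as a single integral along the segment: by the fundamental theorem of calculus applied to $t\mapsto f(x+th)$ and $t\mapsto f(x-th)$,
$$ f(x+h)+f(x-h)-2f(x)=\int_0^1 \langle \nabla f(x+th)-\nabla f(x-th),h\rangle\,dt. $$
Cauchy--Schwarz together with the Lipschitz bound $\|\nabla f(x+th)-\nabla f(x-th)\|\le M\|2th\|$ gives the integrand bound $2Mt\|h\|^2$, and integration in $t$ yields exactly $M\|h\|^2$. I would emphasize that this argument uses only differentiability and $\lip(\nabla f)\le M$, not convexity, which is precisely the remark the statement wishes to record.

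For $(i)\Rightarrow(ii)$, I would first establish differentiability. Since $f$ is a finite continuous convex function on all of $X$, it is locally Lipschitz and its subdifferential $\partial f(x)$ is nonempty; pick $\xi\in\partial f(x)$. Convexity gives the two tangent inequalities $f(x+h)-f(x)-\langle\xi,h\rangle\ge 0$ and $f(x-h)-f(x)+\langle\xi,h\rangle\ge 0$, and their sum is exactly the second difference controlled by $(i)$. Hence each nonnegative remainder is squeezed, $0\le f(x+h)-f(x)-\langle\xi,h\rangle\le M\|h\|^2$, which is Fréchet differentiability with $\nabla f(x)=\xi$ (and forces $\partial f(x)=\{\xi\}$).

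The main obstacle is the Lipschitz estimate with the \emph{optimal} constant $M$: the naive one-sided bound just obtained only yields $\lip(\nabla f)\le 2M$, since bounding a single remainder by the full symmetric difference wastes a factor of two. To recover $M$ I would exploit the symmetry of $(i)$ by showing that $g:=\tfrac{M}{2}\|\cdot\|^2-f$ is convex. Indeed, $g$ is continuous, and a direct computation using the identity $\tfrac12(\|a\|^2+\|b\|^2)-\|\tfrac{a+b}{2}\|^2=\tfrac14\|a-b\|^2$ together with $(i)$ at center $\tfrac{a+b}{2}$ shows the midpoint convexity of $g$, hence its convexity. Differentiating, convexity of $g$ is equivalent to the \emph{sharp} descent inequality $f(x+h)-f(x)-\langle\nabla f(x),h\rangle\le\tfrac{M}{2}\|h\|^2$. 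From this and convexity of $f$, the standard minimizer argument applied to $z\mapsto f(z)-\langle\nabla f(x),z\rangle$, evaluated at $z=y-\tfrac1M(\nabla f(y)-\nabla f(x))$, gives the cocoercivity bound $f(y)-f(x)-\langle\nabla f(x),y-x\rangle\ge\tfrac{1}{2M}\|\nabla f(y)-\nabla f(x)\|^2$; symmetrizing in $x,y$ and adding produces $\langle\nabla f(x)-\nabla f(y),x-y\rangle\ge\tfrac1M\|\nabla f(x)-\nabla f(y)\|^2$, and Cauchy--Schwarz finishes with $\|\nabla f(x)-\nabla f(y)\|\le M\|x-y\|$.
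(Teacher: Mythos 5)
Your proposal is correct, and its pivot is the same as the paper's: both proofs reduce the sharp estimate $\lip(\nabla f)\le M$ to the convexity of $\tfrac{M}{2}\|\cdot\|^2-f$, verified by midpoint convexity using $(i)$ at the midpoint with $h=\tfrac{x-y}{2}$ and the parallelogram identity, and both record that $(ii)\Rightarrow(i)$ needs no convexity (your single integral representation of the second difference is an equivalent packaging of the paper's two Taylor estimates added together). Where you genuinely diverge is in how much you prove from scratch. For differentiability, the paper simply cites the criterion that a continuous convex function with $\lim_{h\to 0}\|h\|^{-1}\bigl(f(x+h)+f(x-h)-2f(x)\bigr)=0$ is differentiable, while your squeeze $0\le f(x\pm h)-f(x)\mp\langle\xi,h\rangle\le M\|h\|^2$ for $\xi\in\partial f(x)$ is self-contained, quantitative, and directly yields Fr\'echet (not just G\^ateaux) differentiability with $\partial f(x)=\{\xi\}$. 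More significantly, the paper stops at ``it is enough to see that $F=\tfrac{M}{2}\|\cdot\|^2-f$ is convex,'' implicitly invoking the known equivalence (recalled in the introduction) between $\lip(\nabla f)\le M$ and that convexity; you close this gap explicitly, correctly diagnosing that the naive bound from the squeeze alone only gives the constant $2M$, and then deriving the descent inequality $f(x+h)-f(x)-\langle\nabla f(x),h\rangle\le\tfrac{M}{2}\|h\|^2$ from convexity of $g$, the cocoercivity bound $f(y)-f(x)-\langle\nabla f(x),y-x\rangle\ge\tfrac{1}{2M}\|\nabla f(y)-\nabla f(x)\|^2$ via the minimizer $z=y-\tfrac1M(\nabla f(y)-\nabla f(x))$, and finally the $M$-Lipschitz bound by symmetrization and Cauchy--Schwarz. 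The trade-off: the paper's proof is shorter because it leans on two standard facts of convex analysis, whereas yours is longer but fully self-contained and makes visible exactly where the factor of two is recovered; incidentally, your cocoercivity step reproves (for smooth $f$) the inequality that the paper later needs as Proposition \ref{necessityconditioncw11}.
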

\begin{proof} 
First we prove that $(ii)$ implies $(i)$, which is also valid for non-convex functions. Using that $\lip( \nabla f ) \leq M,$ it follows from Taylor's theorem that
$$
f(x+h)-f(x)- \langle \nabla f (x), h \rangle  \leq \frac{M}{2} \|h\|^2.
$$
Similarly we have 
$$
f(x-h)-f(x)- \langle \nabla f (x), -h \rangle \leq \frac{M}{2} \|h\|^2,
$$
and combining both inequalities we get $(i)$. Now we do assume that $f$ is a convex function and let us show that $(i)$ implies $(ii)$. Since
$$
\lim_{h \to 0} \frac{f(x+h)+f(x-h)-2f(x)}{\|h\|} = 0,
$$
for all $x\in X$ and $f$ is convex and continuous, $f$ is differentiable on $X$. In order to  prove that $\lip( \nabla f ) \leq M$ it is enough to see that the function $F: X \to \R$ defined by $F(x)=\frac{M}{2}\|x\|^2 - f (x), \: x\in X,$ is convex. Since $f$ is a continuous function, the convexity of $F$ is equivalent to: 
$$
F \left( \frac{x+y}{2} \right) \leq \tfrac{1}{2} F(x)+ \tfrac{1}{2} F(y) \quad \text{for all} \quad x,y \in X.
$$
To see this, given $x,y \in X,$ we can write
\begin{align*}
F \left( \frac{x+y}{2} \right) = \tfrac{1}{2} F(x)+ \tfrac{1}{2} F(y) + \frac{1}{2} \left( f(x)+f(y)-2 f \left( \frac{x+y}{2} \right) - M \Big \| \frac{x-y}{2} \Big \|^2 \right)
\end{align*}
Applying $(ii)$ with $h = \frac{x-y}{2}$ we obtain that 
$$
f(x)+f(y)-2 f \left( \frac{x+y}{2} \right) \leq  M \Big \| \frac{x-y}{2} \Big \|^2,
$$
which in turns implies $ F \left( \frac{x+y}{2} \right) \leq \tfrac{1}{2} F(x)+ \tfrac{1}{2} F(y).$ 
\end{proof}

Recall that for a function $f:X \to\R,$ the convex envelope of $f$ is defined by
$$
\textrm{conv}(f)(x)=\sup\{ \phi(x) \, : \, \phi \textrm{ is convex and lsc }, \phi\leq f\}.
$$
Another expression for $\textrm{conv}(f)$ is:
$$
\textrm{conv}(f)(x)=\inf \left\lbrace \sum_{j=1}^{n}\lambda_{j} f(x_j) \, : \, \lambda_j\geq 0,
\sum_{j=1}^{n}\lambda_j =1, x=\sum_{j=1}^{n}\lambda_j x_j, n\in\N \right\rbrace,
$$

The following result shows that the operator $f\mapsto \textrm{conv}(f)$ not only preserves $C^{1,1}$ smoothness of functions $f$ and Lipschitz constants of their gradients $\nabla f$, but also that, even for some nondifferentiable functions $f$, their convex envelopes $\textrm{conv}(f)$ will be of class $C^{1,1}$, with best possible constants, provided that the functions $f$ satisfy suitable one-sided estimates. This is a slight (but very significant for our purposes) improvement of particular cases of the results in \cite{GriewankRabier}, \cite[Theorem 7]{CepedelloRegularization}, and \cite{KirchheimKristensen}.

\begin{theorem}\label{convexenvelopeconstant}
Let $X$ be a Banach space. Suppose that a function $f: X \to \R$ has a convex, lower semicontinous minorant, and satisfies
$$
f(x+h)+f(x-h)-2f(x) \leq M \|h\|^2 \quad \text{for all} \quad x,h \in X.
$$
Then $\psi:= \textrm{conv}(f)$ is a continuous convex function satisfying the same property. In view of Proposition \ref{characterizationdifferentiability}, we conclude that $\psi$ is of class $C^{1,1}( X)$, with $\lip ( \nabla \psi ) \leq M.$ In particular, for a function $f \in C^{1,1}(X),$ we have that $\textrm{conv}(f) \in C^{1,1}(X)$, with $\lip( \nabla \psi) \leq \lip ( \nabla f).$
\end{theorem}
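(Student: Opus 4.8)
The plan is to verify that $\psi:=\textrm{conv}(f)$ is a finite-valued, continuous, convex function satisfying the same second-difference estimate as $f$; once this is done, Proposition~\ref{characterizationdifferentiability} applies verbatim and gives $\psi\in C^{1,1}(X)$ with $\lip(\nabla\psi)\le M$. That $\psi$ is convex is clear from its definition as a supremum of convex functions. For finiteness, let $\phi$ denote the convex lower semicontinuous minorant furnished by the hypothesis; since $\psi$ is the largest convex lsc function below $f$ we have $\phi\le\psi\le f$, so $\psi$ is real-valued at every point of $X$.

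The heart of the proof is to transfer the estimate through the convex envelope, i.e.\ to show
$$
\psi(x+h)+\psi(x-h)-2\psi(x)\le M\|h\|^2\qquad\text{for all }x,h\in X.
$$
Fix $x,h$ and $\varepsilon>0$. Using the convex-combination formula for $\textrm{conv}(f)$, choose $\lambda_j\ge 0$ with $\sum_j\lambda_j=1$ and $\sum_j\lambda_j x_j=x$ such that $\sum_j\lambda_j f(x_j)\le\psi(x)+\varepsilon$. Since $\sum_j\lambda_j(x_j+h)=x+h$ and $\sum_j\lambda_j(x_j-h)=x-h$, the same weights applied to the translated points $x_j\pm h$ are admissible convex representations of $x\pm h$; hence $\psi(x+h)\le\sum_j\lambda_j f(x_j+h)$ and $\psi(x-h)\le\sum_j\lambda_j f(x_j-h)$. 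Combining these with $-2\psi(x)\le -2\sum_j\lambda_j f(x_j)+2\varepsilon$ and then applying the hypothesis on $f$ to each index $j$ yields
$$
\psi(x+h)+\psi(x-h)-2\psi(x)\le\sum_j\lambda_j\bigl(f(x_j+h)+f(x_j-h)-2f(x_j)\bigr)+2\varepsilon\le M\|h\|^2+2\varepsilon.
$$
Letting $\varepsilon\to 0$ gives the desired inequality. I expect this to be the crux of the argument; the only point requiring care is that the infimum defining $\textrm{conv}(f)$ need not be attained, which forces the use of $\varepsilon$-optimal representations and the verification that translating them by $\pm h$ preserves admissibility.

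It remains to establish continuity, which is needed before invoking Proposition~\ref{characterizationdifferentiability}. Pick a point $x_0$ at which $\phi$ is finite; by lower semicontinuity, $\phi$, and hence $f\ge\phi$, is bounded below on some ball $B(x_0,\delta)$. Centering the hypothesis on $f$ at $x_0$, for $\|h\|\le\delta$ we obtain
$$
f(x_0+h)\le 2f(x_0)-f(x_0-h)+M\|h\|^2\le 2f(x_0)-\inf_{B(x_0,\delta)}\phi+M\delta^2,
$$
so $f$, and therefore $\psi\le f$, is bounded above on $B(x_0,\delta)$. A finite convex function that is bounded above on a neighborhood of a point is continuous (indeed locally Lipschitz) on all of $X$, so $\psi$ is continuous. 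Proposition~\ref{characterizationdifferentiability} now yields $\psi\in C^{1,1}(X)$ with $\lip(\nabla\psi)\le M$.

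Finally, for the last assertion of the statement, if $f\in C^{1,1}(X)$ then the implication $(ii)\Rightarrow(i)$ of Proposition~\ref{characterizationdifferentiability}, which does not require convexity, shows that $f$ satisfies the second-difference estimate with $M=\lip(\nabla f)$; since $\textrm{conv}(f)$ is assumed to be a genuine (finite) function, equivalently $f$ possesses a convex minorant, the general case applies and gives $\textrm{conv}(f)\in C^{1,1}(X)$ with $\lip(\nabla\,\textrm{conv}(f))\le\lip(\nabla f)$.
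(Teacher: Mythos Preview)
Your proof is correct and follows essentially the same approach as the paper: the key step---transferring the second-difference bound to $\psi$ by taking an $\varepsilon$-optimal convex representation of $x$ and translating it by $\pm h$---is identical to the paper's argument. The only minor difference is in the continuity step: the paper observes directly that $\psi$, being a supremum of lsc convex functions and bounded above by $f$, is proper, convex and lsc, hence continuous on the interior of its domain (citing \cite[Proposition~4.1.5]{BorweinVanderwerffbook}), whereas you instead show $\psi$ is bounded above on a ball and invoke the equivalent standard fact that this forces continuity of a convex function; both routes are valid and equally short.
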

\begin{proof}
The function $\psi$ is well defined as $\psi \leq f$ and $f$ has a convex, lsc minorant. Now let us check the mentioned inequality. Given $x, h \in X$ and $\varepsilon>0,$ we can pick $n\in \N, \: x_1, \ldots, x_n \in X$ and $\lambda_1, \ldots, \lambda_n >0$ such that 
$$
\psi (x) \geq \sum_{i=1}^n \lambda_i f(x_i) - \varepsilon, \quad \sum_{i=1}^n \lambda_i=1 \quad \text{and} \quad \sum_{i=1}^n \lambda_i x_i = x.
$$
Since $x\pm h = \sum_{i=1}^n \lambda_i ( x_i \pm h ),$ we have $\psi( x \pm h ) \leq \sum_{i=1}^n \lambda_i f( x_i \pm h).$ This leads us to
$$
\psi(x+h)+\psi(x-h)-2\psi(x) \leq \sum_{i=1}^n \lambda_i \left( f(x_i+h)+f(x_i-h)-2f(x_i) \right) + 2 \varepsilon
$$
By the assumption on $f,$ we obtain
$$
 f(x_i+h)+f(x_i-h)-2f(x_i) \leq M \|h\|^2 \quad i=1, \ldots,n.
 $$
Therefore
\begin{equation}\label{inequalityconvexenvelope}
\psi(x+h)+\psi(x-h)-2\psi(x) \leq M \|h\|^2 + 2\varepsilon.
\end{equation}
Since $\varepsilon>0$ is arbitrary, we get the desired inequality. It is clear that $\psi,$ being a supremum of a family of lower semicontinuous convex functions that are pointwise uniformly bounded (by the function $f$), is convex, proper and lower semicontinuous.
And because all lower semicontinuous, proper and convex functions are continuous at interior points of their domains (see \cite[Proposition 4.1.5]{BorweinVanderwerffbook} for instance), we also have that $\psi$ is continuous. 
\end{proof}

\begin{theorem}\label{theoremformulaC11convex}
Given a $1$-jet $(f,G)$ defined on $E$ satisfying property $(CW^{1,1})$ with constant $M$ on $E,$ the formula
$$
F=\textrm{conv}(g), \quad g(x) = \inf_{y \in E} \lbrace f(y)+\langle G(y), x-y \rangle + \tfrac{M}{2} \|x-y\|^2 \rbrace, \quad x\in X,
$$
defines a $C^{1,1}$ convex function such that $F_{|_E}=f$, $(\nabla F)_{|_E}=G$, and $\lip(\nabla F) \leq M$. 

Moreover, if $H$ is another $C^{1,1}$ convex function with $H_{|_E}=f$, $(\nabla H)_{|_E}=G$, and $\lip(\nabla H) \leq M$, then $H \leq F$.
\end{theorem}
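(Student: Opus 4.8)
The plan is to verify that $g$ satisfies the hypotheses of Theorem~\ref{convexenvelopeconstant}, so that $F=\mathrm{conv}(g)$ is automatically $C^{1,1}$ convex with the right constant, and then to exploit the affine functions determined by the jet as simultaneous minorants of both $g$ and $F$ in order to read off the interpolation and the maximality. First I would record the second-difference estimate for $g$. For fixed $y\in E$ the function $\phi_y(x):=f(y)+\langle G(y),x-y\rangle+\tfrac{M}{2}\|x-y\|^2$ is affine plus $\tfrac{M}{2}\|x-y\|^2$, so by the parallelogram law $\phi_y(x+h)+\phi_y(x-h)-2\phi_y(x)=M\|h\|^2$ identically. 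Choosing an $\varepsilon$-almost minimizer $y$ for $g(x)$ and using $g(x\pm h)\le\phi_y(x\pm h)$ gives $g(x+h)+g(x-h)-2g(x)\le M\|h\|^2$, exactly as in the proof of Theorem~\ref{convexenvelopeconstant}.

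The key step is to produce a convex, lower semicontinuous minorant of $g$, and this is where $(CW^{1,1})$ enters. I claim that for every $y_0\in E$ the affine map $\ell_{y_0}(x):=f(y_0)+\langle G(y_0),x-y_0\rangle$ satisfies $\ell_{y_0}\le g$. Indeed, bounding $f(y)$ from below by $(CW^{1,1})$ and setting $u=G(y)-G(y_0)$, $v=x-y$, one computes that $\phi_y(x)-\ell_{y_0}(x)\ge\tfrac{1}{2M}\|u\|^2+\langle u,v\rangle+\tfrac{M}{2}\|v\|^2=\tfrac{1}{2M}\|u+Mv\|^2\ge0$; taking the infimum over $y\in E$ yields $\ell_{y_0}\le g$. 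In particular $g$ has an affine (hence convex and lsc) minorant, so $F=\mathrm{conv}(g)$ is well defined and, by Theorem~\ref{convexenvelopeconstant}, convex and of class $C^{1,1}(X)$ with $\lip(\nabla F)\le M$.

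For the interpolation I would use the sandwich $\ell_{y_0}\le F\le g$, valid since $\ell_{y_0}$ is a convex minorant of $g$ and $\mathrm{conv}(g)\le g$. For $x\in E$, taking $y=x$ in the infimum shows $g(x)\le f(x)$, while $\ell_x(x)=f(x)$; evaluating the sandwich at $x\in E$ then forces $F(x)=f(x)$. Moreover, for $y_0\in E$ the inequality $\ell_{y_0}\le F$ together with $F(y_0)=f(y_0)=\ell_{y_0}(y_0)$ exhibits $\ell_{y_0}$ as an affine function touching $F$ from below at $y_0$, i.e. $G(y_0)\in\partial F(y_0)$; since $F$ is differentiable, $\nabla F(y_0)=G(y_0)$.

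Finally, for the maximality statement let $H$ be any $C^{1,1}$ convex function with $H=f$, $\nabla H=G$ on $E$ and $\lip(\nabla H)\le M$. The one-sided Taylor estimate (which holds for arbitrary $C^{1,1}$ functions, as in the $(ii)\Rightarrow(i)$ part of Proposition~\ref{characterizationdifferentiability}) gives $H(x)\le H(y)+\langle\nabla H(y),x-y\rangle+\tfrac{M}{2}\|x-y\|^2$, which for $y\in E$ reads $H\le\phi_y$; taking the infimum over $y$ gives $H\le g$, and since $H$ is convex and continuous, $H\le\mathrm{conv}(g)=F$. I expect the only genuinely delicate point to be the minorant claim of the second paragraph: the bookkeeping that turns $(CW^{1,1})$ together with the quadratic term into the perfect square $\tfrac{1}{2M}\|u+Mv\|^2$. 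Everything else is a direct application of Theorem~\ref{convexenvelopeconstant} and the touching-from-below characterization of the gradient of a convex function.
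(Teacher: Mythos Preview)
Your proposal is correct and follows essentially the same route as the paper's proof: the second-difference bound for $g$ is Lemma~\ref{estimationinfimumc11}, your affine-minorant computation is exactly Lemma~\ref{minimalsmallerthaninf} (the same perfect-square identity), and your maximality argument coincides with the paper's. The only cosmetic difference is that the paper packages all the affine minorants into $m(x)=\sup_{z\in E}\ell_z(x)$ and argues $G(y_0)\in\partial m(y_0)=\{\nabla F(y_0)\}$, whereas you work with each $\ell_{y_0}$ individually and read $G(y_0)\in\partial F(y_0)$ directly; both yield the gradient identification in the same way.
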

\begin{proof}
The proof follows the lines of that of \cite[Theorem 1.4]{AM}, but will be considerably simplified by applying Theorem \ref{convexenvelopeconstant} to the function $g$ defined in the statement (instead of applying the result from \cite{KirchheimKristensen} to a function arising from a more elaborate construction involving Whitney's classical extension techniques with dyadic cubes and associated partitions of unity). It is worth noting that the function $g$ is not differentiable in general. Nonetheless $F=\textrm{conv}(g)$ is of class $C^{1,1}$ because, as we next show, $g$ satisfies the one-sided estimate of Theorem \ref{convexenvelopeconstant}.

\begin{lemma}\label{estimationinfimumc11}
We have
$$
g(x+h)+g(x-h)-2g(x) \leq M \| h \|^2 \quad \text{for all} \quad x,h \in X.
$$
\end{lemma}
\begin{proof}
Given $x,h \in X$ and $\varepsilon>0,$ by definition of $g,$ we can pick $y\in E$ with
$$
g(x) \geq f(y)+ \langle G(y), x-y \rangle + \tfrac{M}{2} \| x-y\|^2 - \varepsilon.
$$
We then have
\begin{align*}
g(x+h)& +g(x-h)-2g(x)  \leq f(y)+\langle G(y), x+h-y \rangle + \tfrac{M}{2} \|x+h-y\|^2 \\
& \quad + f(y)+\langle G(y), x-h-y \rangle + \tfrac{M}{2} \|x-h-y\|^2 \\
& \quad -2 \left( f(y)+ \langle G(y), x-y \rangle +  \tfrac{M}{2} \|x-y\|^2 \right) + 2 \varepsilon \\
& = \tfrac{M}{2} \left( \|x+h-y\|^2 + \|x-h-y\|^2- 2 \| x-y\|^2 \right) + 2 \varepsilon \\
& = M \| h \|^2 +2 \varepsilon.
\end{align*}
Since $\varepsilon$ is arbitrary, the above proves our Lemma.

\end{proof}

\begin{lemma}\label{minimalsmallerthaninf}
We have that
$$
f(z)+ \langle G(z), x-z \rangle \leq f(y)+\langle G(y), x-y \rangle + \tfrac{M}{2} \|x-y\|^2 
$$
for every $y,z\in E, \: x\in X.$ 
\end{lemma}
\begin{proof}
Given $y,z\in E, \: x\in X,$ condition $(CW^{1,1})$ implies
\begin{align*}
 & f(y) +\langle G(y), x-y \rangle + \tfrac{M}{2} \|x-y\|^2   \\
 & \geq f(z)+ \langle G(z), y-z \rangle + \tfrac{1}{2M}\| G(y)-G(z)\|^2 + \langle G(y), x-y \rangle + \tfrac{M}{2} \|x-y\|^2 \\
 & = f(z)+ \langle G(z),x-z \rangle + \tfrac{1}{2M}\| G(y)-G(z)\|^2 + \langle G(z)-G(y), y-x \rangle + \tfrac{M}{2} \|x-y\|^2 \\
 & = f(z)+ \langle G(z),x-z \rangle + \tfrac{1}{2M} \| G(y)-G(z) + 2M( y-x) \|^2 \\
 & \geq f(z)+ \langle G(z),x-z \rangle. 
\end{align*}
\end{proof}

The preceding lemma shows that $m \leq g,$ where $g$ is defined as in Theorem \ref{theoremformulaC11convex}, and 
$$
m(x) := \sup_{z \in E} \lbrace f(z)+ \langle G(z), x-z \rangle \rbrace, \quad x\in X.
$$
Bearing in mind the definitions of $g$ and $m$ we then deduce that $f \leq m \leq g \leq f$ on $E.$ Thus $g = f$ on $E.$ We also note that the function $m$, being a supremum of continuous functions, is lower semicontinuous on $X.$ By Lemma \ref{estimationinfimumc11} and Theorem  \ref{convexenvelopeconstant} we then obtain that $F= \textrm{conv}(g)$ is convex and of class $C^{1,1}$, with $\lip(\nabla F) \leq M.$ Since $m$ is convex, by definition of $F,$ we have $m \leq F \leq g,$ where both $m$ and $g$ coincide with $f$ on $E.$ Thus $F=f$ on $E$.

Also, note that $m \leq F$ on $X$ and $F=m$ on $E,$ where $m$ is convex and $F$ is differentiable on $X.$ This implies that $m$ is differentiable on $E$ with $\nabla m (x)= \nabla F (x) $ for all $x\in E$. It is clear, by definition of $m,$ that $G(x) \in \partial m (x)$ (denoting the subdifferential of $m$ at $x$) for every $x\in E$, and this observation shows that $\nabla F = G$ on $E.$ 

Finally, consider another convex extension $H \in C^{1,1}(X)$ of the jet $(f,G)$ with $\lip(\nabla H) \leq M.$ Using Taylor's theorem and the assumptions on $H$ we have that
$$
H (x)  \leq f(y)+ \langle  G(y), x-y \rangle + \tfrac{M}{2} \| x-y\|^2, \quad x\in X, \:y \in E.
$$

Taking the infimum over $y\in E$ we get $H \leq g$ on $X.$ On the other hand, bearing in mind that $H$ is convex, the definition of the convex envelope of a function implies
$H=\textrm{conv}(H) \leq \textrm{conv}(g)=F$ on $X$. 
This completes the proof of Theorem \ref{theoremformulaC11convex}.
\end{proof}

\section{Optimal $C^{1,1}$ extensions of $1$-jets by explicit formulas in Hilbert spaces}

In this section we will prove that formula \eqref{formula for C11 extension} defines a $C^{1,1}$ extension of the jet $(f, G)$ on $E$, provided that this jet satisfies a necessary and sufficient condition found by Wells in \cite{Wells}, which is equivalent to the classical Whitney condition for $C^{1,1}$ extension $(\widetilde{W^{1,1}})$.

\begin{definition}
We will say that a $1$-jet $(f,G)$ defined on a subset $E$ of a Hilbert space satisfies condition $(W^{1,1})$ with constant $M>0$ on $E$ provided that
$$
f(y) \leq f(x) + \frac{1}{2} \langle G(x)+G(y), y-x \rangle + \frac{M}{4} \|x-y\|^2 - \frac{1}{4M} \|G(x)-G(y)\|^2
$$
for all $x,y\in E$.
\end{definition}

Let us first see why this condition is necessary.
\begin{proposition}\label{necessityconditionw11}
\item[$(i)$] If $(f,G)$ satisfies $(W^{1,1})$ on $E$ with constant $M,$ then $G$ is $M$-Lipschitz on $E.$
\item[$(ii)$] If $F$ is a function of class $C^{1,1}(X)$ with $\lip(\nabla F) \leq M,$ then $(F, \nabla F)$ satisfies $(W^{1,1})$ on $E=X$ with constant $M.$
\end{proposition}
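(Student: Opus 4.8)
The plan is to treat the two parts separately, exploiting in each case the symmetry, respectively the optimality, built into the quadratic terms of $(W^{1,1})$.

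For part $(i)$ I would simply symmetrize the condition. Writing the defining inequality once for the pair $(x,y)$ and once for the pair $(y,x)$ and adding them, the function values $f(x),f(y)$ cancel on both sides, and the two inner-product terms $\tfrac12\langle G(x)+G(y),y-x\rangle$ and $\tfrac12\langle G(x)+G(y),x-y\rangle$ cancel against one another. What survives is $0\le \tfrac{M}{2}\|x-y\|^2-\tfrac{1}{2M}\|G(x)-G(y)\|^2$, which rearranges to $\|G(x)-G(y)\|^2\le M^2\|x-y\|^2$, i.e.\ $G$ is $M$-Lipschitz on $E$. This step is routine and presents no difficulty.

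For part $(ii)$ I would start from the two-sided Taylor estimate available for any $F\in C^{1,1}(X)$ with $\lip(\nabla F)\le M$, namely $|F(v)-F(u)-\langle \nabla F(u),v-u\rangle|\le \tfrac{M}{2}\|v-u\|^2$ for all $u,v\in X$; this is precisely the estimate established in the proof of Proposition \ref{characterizationdifferentiability} (applied also to $-h$). Using the upper bound with base point $u=x$ and the lower bound with base point $u=y$, and evaluating both at an arbitrary $z\in X$, I obtain
$$
F(y)+\langle \nabla F(y),z-y\rangle-\tfrac{M}{2}\|z-y\|^2\ \le\ F(z)\ \le\ F(x)+\langle \nabla F(x),z-x\rangle+\tfrac{M}{2}\|z-x\|^2 ,
$$
so that the inequality between the first and last expressions holds for every $z$. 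The idea is then to choose $z$ making this inequality as tight as possible: since the difference of the left- and right-hand sides is a concave quadratic in $z$, its maximum is attained at $z=\tfrac{x+y}{2}+\tfrac{1}{2M}\bigl(\nabla F(y)-\nabla F(x)\bigr)$.

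Substituting this optimal $z$ and expanding the squared norms, the cross terms reorganize exactly into $\tfrac12\langle \nabla F(x)+\nabla F(y),y-x\rangle$, the quadratic in $\|x-y\|$ produces $+\tfrac{M}{4}\|x-y\|^2$, and the quadratic in $\|\nabla F(x)-\nabla F(y)\|$ produces $-\tfrac{1}{4M}\|\nabla F(x)-\nabla F(y)\|^2$; collecting everything yields precisely $(W^{1,1})$ with constant $M$ for the pair $(x,y)$. I expect the only delicate point of the whole argument to be this final algebraic simplification: one must check that the constants $\tfrac{M}{4}$ and $\tfrac{1}{4M}$ emerge exactly, with no slack, which is what keeps the constant $M$ sharp and reflects the fact that the quadratic $\tfrac{M}{2}\|\cdot\|^2$ attains equality in $(W^{1,1})$. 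Everything else is a direct computation valid verbatim in any Hilbert space.
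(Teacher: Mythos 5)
Your proposal is correct and follows essentially the same route as the paper: part $(i)$ is the identical symmetrization, and in part $(ii)$ your optimal point $z=\tfrac{x+y}{2}+\tfrac{1}{2M}\bigl(\nabla F(y)-\nabla F(x)\bigr)$ is exactly the auxiliary point the paper fixes before applying the two-sided Taylor estimate, your version merely motivating its choice as the maximizer of the concave quadratic gap. The final algebra checks out (by the parallelogram law the quadratic terms contribute $\tfrac{M}{4}\|x-y\|^2+\tfrac{1}{4M}\|\nabla F(x)-\nabla F(y)\|^2$, and the cross term $-\tfrac{1}{2M}\|\nabla F(x)-\nabla F(y)\|^2$ turns this into the stated $-\tfrac{1}{4M}$ term), so no gap remains.
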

\begin{proof}
\item[$(i)$] Given $x,y\in E,$ we have
\begin{align*}
f(y) \leq f(x) + \frac{1}{2} \langle G(x)+ G(y), y-x \rangle + \frac{M}{4} \|x-y\|^2 - \frac{1}{4M} \| G(x)- G (y)\|^2\\
f(x) \leq f(y) + \frac{1}{2} \langle G(y)+G(x), x-y \rangle + \frac{M}{4} \|x-y\|^2 - \frac{1}{4M} \|G(x)-G(y)\|^2.
\end{align*}
By combining both inequalities we easily get $ \| G(x)-G(y)\| \leq M \| x-y\|.$ 
\item[$(ii)$] Fix $x,y\in X$ and $z= \frac{1}{2}(x+y)+ \frac{1}{2M}( \nabla F(y)- \nabla F(x)).$ Using Taylor's theorem we obtain
$$
F(z)  \leq F(x) + \langle \nabla F(x), \tfrac{1}{2}(y-x) \rangle  + \tfrac{1}{2M}( \nabla F(y)-\nabla F(x)) \rangle  
 + \tfrac{M}{2}\big \| \tfrac{1}{2}(y-x) + \tfrac{1}{2M}( \nabla F(y)-\nabla F(x)) \big \|^2 
$$
and
$$
F(z)  \geq F(y) + \langle \nabla F(y), \tfrac{1}{2}(x-y) \rangle + \tfrac{1}{2M}( \nabla F(y)- \nabla F(x)) \rangle 
 - \tfrac{M}{2}\big \| \tfrac{1}{2}(x-y) + \tfrac{1}{2M}( \nabla F(y)-\nabla F(x)) \big \|^2 .
$$
Then we easily get
\begin{align*}
F(y) & \leq F(x) + \langle \nabla F(x), \tfrac{1}{2}(y-x) \rangle  + \tfrac{1}{2M}( \nabla F(y)-\nabla F(x)) \rangle \\
& \quad  + \tfrac{M}{2}\big \| \tfrac{1}{2}(y-x) \rangle + \tfrac{1}{2M}( \nabla F(y)-\nabla F(x)) \big \|^2 \\
& \quad -\langle \nabla F(y), \tfrac{1}{2}(x-y) \rangle - \tfrac{1}{2M}( \nabla F(y)-\nabla F(x)) \rangle \\
& \quad + \tfrac{M}{2}\big \| \tfrac{1}{2}(x-y) + \tfrac{1}{2M}( \nabla F(y)- \nabla F(x)) \big \|^2 \\
& = F(x) + \frac{1}{2} \langle \nabla F(x)+ \nabla F(y), y-x \rangle + \tfrac{M}{4} \|x-y\|^2 - \tfrac{1}{4M} \|\nabla F(x)-\nabla F(y)\|^2
\end{align*}

\end{proof}

The following lemma will allow us to deal with the $C^{1,1}$ extension problem for $1$-jets by relying on our previous solution of the $C^{1,1}$ convex extension problem for $1$-jets. 

\begin{lemma}\label{lemmafromconvextononconvex}
Given an arbitrary subset $E$ of a Hilbert space $X$ and a $1$-jet $(f,G)$ defined on $E$, we have the following: $(f,G)$ satisfies $(W^{1,1})$ on $E$, with constant $M>0$, if and only if the $1$-jet $(\tilde{f}, \tilde{G})$ defined by
$
\tilde{f}(x)=f(x) + \frac{M}{2} \|x\|^2, \: \tilde{G}(x)=G(x) + M x , \: x\in E,
$
satisfies property $(CW^{1,1})$ on $E$, with constant $2 M$. 
\end{lemma}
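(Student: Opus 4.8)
The statement is a purely algebraic equivalence between two pointwise inequalities, so the plan is to write out condition $(CW^{1,1})$ for the modified jet $(\tilde f,\tilde G)$ with constant $2M$, substitute the definitions $\tilde f(x)=f(x)+\frac{M}{2}\|x\|^2$ and $\tilde G(x)=G(x)+Mx$, and verify by direct expansion that the resulting inequality is, term for term, exactly $(W^{1,1})$ for $(f,G)$ with constant $M$. Every manipulation in the computation is a reversible identity, so establishing the equivalence of the two inequalities in one pass automatically gives both implications; there is no need to argue the two directions separately, and no analytic input (continuity, completeness, dimension) about $X$ is used.

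Concretely, $(CW^{1,1})$ for $(\tilde f,\tilde G)$ with constant $2M$ reads
$$
\tilde f(x)\geq \tilde f(y)+\langle \tilde G(y),x-y\rangle+\tfrac{1}{4M}\|\tilde G(x)-\tilde G(y)\|^2
$$
for all $x,y\in E$, since $\frac{1}{2\cdot 2M}=\frac{1}{4M}$. The first step is to expand the gradient difference through $\tilde G(x)-\tilde G(y)=\big(G(x)-G(y)\big)+M(x-y)$, which gives
$$
\|\tilde G(x)-\tilde G(y)\|^2=\|G(x)-G(y)\|^2+2M\langle G(x)-G(y),x-y\rangle+M^2\|x-y\|^2.
$$
The constant $2M$ is precisely what is needed here: the coefficient $\frac{1}{4M}$ turns the cross term into $\frac{1}{2}\langle G(x)-G(y),x-y\rangle$ and the last term into $\frac{M}{4}\|x-y\|^2$, which are exactly the weights appearing in $(W^{1,1})$.

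The second and final step is to collect terms. Writing $(CW^{1,1})$ in the form
$$
\tilde f(x)-\tilde f(y)-\langle\tilde G(y),x-y\rangle-\tfrac{1}{4M}\|\tilde G(x)-\tilde G(y)\|^2\geq 0,
$$
and substituting $\tilde f(x)-\tilde f(y)=f(x)-f(y)+\frac{M}{2}(\|x\|^2-\|y\|^2)$ and $\langle\tilde G(y),x-y\rangle=\langle G(y),x-y\rangle+M\langle y,x-y\rangle$ together with the expansion above, one finds that all the terms quadratic in the position variables, namely $\frac{M}{2}(\|x\|^2-\|y\|^2)-M\langle y,x-y\rangle-\frac{M}{4}\|x-y\|^2$, collapse (using $\|x-y\|^2=\|x\|^2-2\langle x,y\rangle+\|y\|^2$) to the single term $\frac{M}{4}\|x-y\|^2$, while the terms linear in $G$ combine as $-\langle G(y),x-y\rangle-\frac{1}{2}\langle G(x)-G(y),x-y\rangle=-\frac{1}{2}\langle G(x)+G(y),x-y\rangle$. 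What remains is exactly
$$
f(x)-f(y)-\tfrac{1}{2}\langle G(x)+G(y),x-y\rangle-\tfrac{1}{4M}\|G(x)-G(y)\|^2+\tfrac{M}{4}\|x-y\|^2\geq 0,
$$
which, upon isolating $f(y)$ and using $-\langle G(x)+G(y),x-y\rangle=\langle G(x)+G(y),y-x\rangle$, is precisely condition $(W^{1,1})$ for $(f,G)$ with constant $M$.

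The only real obstacle is the bookkeeping: one must track the cancellations among the quadratic norm terms and confirm that $\frac{M}{4}\|x-y\|^2$ and $\frac{1}{4M}\|G(x)-G(y)\|^2$ emerge with the correct signs on the correct sides. There is no conceptual difficulty, and the whole argument is essentially one line of verification per term once the expansion of $\|\tilde G(x)-\tilde G(y)\|^2$ is in place. The single point requiring care is the choice of constant $2M$ rather than $M$, which is forced by the requirement that the $\frac{1}{4M}$ coefficients match on both sides.
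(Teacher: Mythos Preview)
Your proof is correct and follows essentially the same route as the paper's: both arguments substitute the definitions of $\tilde f,\tilde G$ into the $(CW^{1,1})$ expression and expand. Your version is in fact slightly cleaner, since you observe that the left-hand side of $(CW^{1,1})$ for $(\tilde f,\tilde G)$ with constant $2M$ is \emph{identically equal} to the left-hand side of $(W^{1,1})$ for $(f,G)$ with constant $M$, so both implications fall out at once; the paper instead treats the two directions separately, applying the assumed inequality midway and then checking the residual terms cancel to zero.
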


\begin{proof}
Suppose first that  $(f,G)$ satisfies $(W^{1,1})$ on $E$ with constant $M>0.$ We have, for all $x,y\in E,$
\begin{align*}
&  \tilde{f}(x)-\tilde{f}(y)- \langle \tilde{G}(y),x-y \rangle- \frac{1}{4M} \| \tilde{G}(x)-\tilde{G}(y)\|^2  \\
& = f(x) -f(y)+ \frac{M}{2} \|x\|^2-\frac{M}{2} \|y\|^2- \langle G(y)+ My, x-y \rangle \\
& \quad - \frac{1}{4M} \| G(x)-G(y)+ M(x-y) \|^2 \\
& \geq \frac{1}{2} \langle G(x)+G(y), x-y \rangle - \frac{M}{4} \|x-y\|^2 + \frac{1}{4M} \|G(x)-G(y)\|^2 \\
& \quad + f(x) -f(y)+ \frac{M}{2} \|x\|^2-\frac{M}{2} \|y\|^2- \langle G(y)+ My, x-y \rangle \\
& \quad - \frac{1}{4M} \| G(x)-G(y)+ M(x-y) \|^2 \\
& = \frac{M}{2} \|x\|^2+\frac{M}{2} \|y\|^2 - M \langle x,y \rangle- \frac{M}{2}\| x-y\|^2=0.
\end{align*}
Conversely, if $(\tilde{f}, \tilde{G})$ satisfies $(CW^{1,1})$ on $E$ with constant $2M,$ we have
\begin{align*}
& f(x)+ \frac{1}{2} \langle G(x)+G(y), y-x \rangle + \frac{M}{4} \| x-y\|^2 - \frac{1}{4M} \| G(x)-G(y)\|^2-f(y) \\
& = \tilde{f}(x)- \frac{M}{2}\| x\|^2 + \frac{1}{2} \langle \tilde{G}(x)+ \tilde{G}(y) - M(x+y), y-x \rangle + \frac{M}{4} \| x-y\|^2 \\
& \quad - \frac{1}{4M}\| \tilde{G}(x)-\tilde{G}(y)-M(x-y)\|^2 - \tilde{f}(y)+ \frac{M}{2}\| y \|^2 \\
& = \tilde{f}(x)-\tilde{f}(y) + \frac{1}{2} \langle \tilde{G}(x)+ \tilde{G}(y), y-x \rangle + \frac{M}{4} \| x-y\|^2 \\
& \quad - \frac{1}{4M} \| \tilde{G}(x)-\tilde{G}(y)-M(x-y)\|^2 \\
& \geq \langle \tilde{G}(y),x-y \rangle + \frac{1}{4M} \| \tilde{G}(x)-\tilde{G}(y)\|^2 + \frac{1}{2} \langle \tilde{G}(x)+\tilde{G(y)},y-x \rangle \\
& \quad + \frac{M}{4} \| x-y \|^2 - \frac{1}{4M} \| \tilde{G}(x)-\tilde{G}(y)-M(x-y) \|^2 = 0.
\end{align*}

\end{proof}

\begin{theorem}\label{theoremformulaC11nonnecessaryconvex}
Let $E$ be a subset of a Hilbert space $X$. Given a $1$-jet $(f,G)$ satisfying property $(W^{1,1})$ with constant $M$ on $E$, the formula
\begin{align*}
& F=\textrm{conv}(g)- \tfrac{M}{2}\| \cdot \|^2,\\
& g(x) = \inf_{y \in E} \lbrace f(y)+\langle G(y), x-y \rangle + \tfrac{M}{2} \|x-y\|^2 \rbrace + \tfrac{M}{2}\|x\|^2 , \quad x\in X,
\end{align*}
defines a $C^{1,1}(X)$ function with $F_{|_E}=f$, $(\nabla F)_{|_E} =G$, and $\lip(\nabla F) \leq M$. 

Moreover, if $H$ is another $C^{1,1}$ function with $H=f$ and $\nabla H=G$ on $E$ and $\lip(\nabla H) \leq M,$ then $H \leq F.$
\end{theorem}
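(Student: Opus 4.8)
The plan is to reduce the $C^{1,1}$ extension problem for the (possibly non-convex) jet $(f,G)$ to the $C^{1,1}_{\textrm{conv}}$ extension problem already solved in Theorem \ref{theoremformulaC11convex}, exactly along the lines sketched in the introduction. First I would set $\tilde{f}(x)=f(x)+\frac{M}{2}\|x\|^2$ and $\tilde{G}(x)=G(x)+Mx$ for $x\in E$. Since $(f,G)$ satisfies $(W^{1,1})$ with constant $M$, Lemma \ref{lemmafromconvextononconvex} tells us that $(\tilde{f},\tilde{G})$ satisfies $(CW^{1,1})$ with constant $2M$. Theorem \ref{theoremformulaC11convex}, applied to $(\tilde{f},\tilde{G})$ with constant $2M$, then produces a convex function $\tilde{F}=\textrm{conv}(\tilde{g})\in C^{1,1}(X)$ with $\textrm{Lip}(\nabla\tilde{F})\le 2M$, with $\tilde{F}=\tilde{f}$ and $\nabla\tilde{F}=\tilde{G}$ on $E$, where $\tilde{g}(x)=\inf_{y\in E}\{\tilde{f}(y)+\langle\tilde{G}(y),x-y\rangle+M\|x-y\|^2\}$.

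The key bookkeeping step is to check that $\tilde{g}$ coincides with the function $g$ in the statement. Substituting the definitions of $\tilde{f}$ and $\tilde{G}$ and expanding, the terms depending on $y$ reorganize as $\frac{M}{2}\|y\|^2+\langle My,x-y\rangle+M\|x-y\|^2=\frac{M}{2}\|x-y\|^2+\frac{M}{2}\|x\|^2$, so that $\tilde{g}(x)=\inf_{y\in E}\{f(y)+\langle G(y),x-y\rangle+\frac{M}{2}\|x-y\|^2\}+\frac{M}{2}\|x\|^2=g(x)$; hence $\tilde{F}=\textrm{conv}(g)$ and the stated formula reads $F=\tilde{F}-\frac{M}{2}\|\cdot\|^2$. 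I would then verify the three claimed properties of $F$. Smoothness is immediate since $\tilde{F}$ and $\frac{M}{2}\|\cdot\|^2$ are both $C^{1,1}$. On $E$ we get $F=\tilde{F}-\frac{M}{2}\|\cdot\|^2=\tilde{f}-\frac{M}{2}\|\cdot\|^2=f$, and, since the gradient of $\frac{M}{2}\|\cdot\|^2$ at $x$ is $Mx$, we get $\nabla F(x)=\nabla\tilde{F}(x)-Mx=\tilde{G}(x)-Mx=G(x)$ for $x\in E$. For the Lipschitz bound I would use the characterization recalled in the introduction: $\textrm{Lip}(\nabla F)\le M$ is equivalent to $F+\frac{M}{2}\|\cdot\|^2$ being convex and $F-\frac{M}{2}\|\cdot\|^2$ being concave. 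The first is just $\tilde{F}$, which is convex by construction; the second is $\tilde{F}-M\|\cdot\|^2$, which is concave precisely because $\textrm{Lip}(\nabla\tilde{F})\le 2M$.

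For minimality, given a competitor $H\in C^{1,1}(X)$ with $H=f$, $\nabla H=G$ on $E$ and $\textrm{Lip}(\nabla H)\le M$, I would set $\tilde{H}=H+\frac{M}{2}\|\cdot\|^2$. Then $\tilde{H}$ is $C^{1,1}$, agrees with the jet $(\tilde{f},\tilde{G})$ on $E$, is convex (being $H+\frac{M}{2}\|\cdot\|^2$), and satisfies $\textrm{Lip}(\nabla\tilde{H})\le 2M$, since $\tilde{H}+M\|\cdot\|^2=H+\frac{3M}{2}\|\cdot\|^2$ is convex and $\tilde{H}-M\|\cdot\|^2=H-\frac{M}{2}\|\cdot\|^2$ is concave. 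The minimality clause of Theorem \ref{theoremformulaC11convex} then gives $\tilde{H}\le\tilde{F}$, and subtracting $\frac{M}{2}\|\cdot\|^2$ from both sides yields $H\le F$. The work here is essentially routine once the reduction is in place; the only point demanding care is the constant bookkeeping, namely tracking how the shift by $\frac{M}{2}\|\cdot\|^2$ turns the constant $M$ for $(W^{1,1})$ into the constant $2M$ for $(CW^{1,1})$, and confirming that subtracting $\frac{M}{2}\|\cdot\|^2$ brings the gradient Lipschitz constant back down from $2M$ to $M$.
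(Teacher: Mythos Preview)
Your proposal is correct and follows essentially the same reduction as the paper: pass to $(\tilde f,\tilde G)$ via Lemma~\ref{lemmafromconvextononconvex}, apply Theorem~\ref{theoremformulaC11convex} with constant $2M$, verify $\tilde g=g$, and subtract $\tfrac{M}{2}\|\cdot\|^2$. The only cosmetic differences are that for $\lip(\nabla F)\le M$ you invoke the convex/concave characterization directly (whereas the paper routes this through Proposition~\ref{necessityconditioncw11}, Lemma~\ref{lemmafromconvextononconvex}, and Proposition~\ref{necessityconditionw11}), and for the maximality clause you appeal directly to the minimality statement in Theorem~\ref{theoremformulaC11convex} rather than redoing the Taylor-estimate-plus-convex-envelope argument; both shortcuts are valid and arguably cleaner.
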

\begin{proof}
From Lemma \ref{lemmafromconvextononconvex}, we know that the jet $(\tilde{f}, \tilde{G})$ defined by
$$
\tilde{f}(x)=f(x) + \frac{M}{2} \|x\|^2, \quad \tilde{G}(x)=G(x) + M x , \quad x\in E,
$$
satisfies property $(CW^{1,1})$ on $E$ with constant $2 M.$ Then, by Theorem \ref{theoremformulaC11convex}, the function
$$
\tilde{F} = \textrm{conv} (g), \quad \tilde{g}(x) = \inf_{y \in E} \lbrace \tilde{f}(y)+\langle \tilde{G}(y), x-y \rangle + M \|x-y\|^2 \rbrace, \quad x\in X,
$$
is convex and of class $C^{1,1}$ with $(\tilde{F}, \nabla \tilde{F})=( \tilde{f}, \tilde{G})$ on $E$ and $\lip(\nabla \tilde{F} ) \leq 2 M$. By an easy calculation we get that
$$
\tilde{g}(x) = \inf_{y \in E} \lbrace f(y)+\langle G(y), x-y \rangle + \tfrac{M}{2} \|x-y\|^2 \rbrace + \tfrac{M}{2}\|x\|^2, \quad x\in X.
$$
Now, according to Proposition \ref{necessityconditioncw11}, the jet $( \tilde{F}, \nabla \tilde{F})$ satisfies condition $(CW^{1,1})$  with constant $2M$ on the whole $X.$ Thus, if $F$ is the function defined by
$$
F(x)= \tilde{F}(x)- \frac{M}{2}\|x\|^2,\quad x\in X,
$$
we get, thanks to Lemma \ref{lemmafromconvextononconvex}, that the jet $(F, \nabla F)$ satisfies condition $(W^{1,1})$ with constant $M$ on $X$. Hence, by Proposition \ref{necessityconditionw11}, $F$ is of class $C^{1,1}(X)$, with $\lip(\nabla F) \leq M$. From the definition of $ \tilde{f}, \tilde{G}, \tilde{F}$ and $F$ it is immediate that $F=f$ and $\nabla F = G$ on $E$. 

Finally, suppose that $H$ is another $C^{1,1}(X)$ function with $H=f$ and $\nabla H=G$ on $E$ and $\lip(\nabla H) \leq M.$ Using all of these assumptions together with Taylor's Theorem we have that
$$
H(x) + \frac{M}{2}\| x\|^2 \leq f(y)+ \langle G(y), x-y \rangle + \frac{M}{2} \| x-y\|^2 + \frac{M}{2}\|x \|^2,
$$
for all $x\in X, y \in E.$ Taking the infimum over $E$ we get that 
$$
H(x) + \frac{M}{2}\| x \|^2 \leq g(x), \quad x\in X.
$$
Since $H$ is $C^{1,1}(X)$ with $\lip(\nabla H) \leq M,$ the jet $(H, \nabla H)$ satisfies the condition $(W^{1,1})$ on $E$ with constant $M.$ Using Lemma \ref{lemmafromconvextononconvex}, we obtain that $(\tilde{H}, \nabla \tilde{H})$ (defined as in that Lemma) satisfies $(CW^{1,1})$ on $E$ with constant $2M.$ In particular the function $X \ni x \mapsto \tilde{H}(x)=H(x) + \frac{M}{2} \| x \|^2$ is convex, which implies that
$$
 \tilde{H} = \textrm{conv}(\tilde{H}) \leq g.
 $$
Therefore, $\tilde{H} \leq \tilde{F}$ on $X$, from which we obtain that $H\leq F$ on $X$.
\end{proof}

\section{$C^{1,\omega}$ convex extensions of $1$-jets by explicit formulas in Hilbert spaces}

Throughout this section, unless otherwise stated, we will assume that $X$ is a Hilbert space and $\omega: [0,+\infty) \to [0,+ \infty)$ is a concave and increasing function such that $\omega(0)=0$ and $\lim_{t \to +\infty} \omega(t)=+\infty.$ Also, we will denote
\begin{equation}\label{definition of varphi}
\varphi(t)= \int_0^t \omega(s) ds
\end{equation}
for every $t \geq 0$. It is obvious that $\varphi$ is differentiable with $\varphi'=\omega$ on $[0,+\infty)$ and, because $\omega$ is strictly increasing, $\varphi$ is strictly convex. The function $\omega$ has an inverse $\omega^{-1}: [0,+ \infty) \to [0,+\infty)$ which is convex and  strictly increasing, with $\omega^{-1}(0)=0.$ We also note that
\begin{align*}
\omega( c t ) \leq c \omega(t) \quad \text{and} \quad \omega^{-1}( ct ) \geq c \omega^{-1}(t) \quad \text{for} \quad c \geq 1, \: t \geq 0 \\
\omega( c t ) \geq c \omega(t) \quad \text{and} \quad \omega^{-1}( ct ) \leq c \omega^{-1}(t) \quad \text{for} \quad c \leq 1, \: t \geq 0.
\end{align*}

In the sequel we will make intensive use of the Fenchel conjugate of a function on the Hilbert space. Recall that, given a function $g: X \to \R$, the Fenchel conjugate of $g$ is defined by
$$
g^*(x) = \sup_{z\in X} \lbrace \langle x,z \rangle-g(z) \rbrace, \quad x\in X,
$$
where $g^*$ may take the value $+\infty$ at some $x.$ We next gather some elementary properties of this operator which we will need later on. A detailed exposition can be found in \cite[Chapter 2, Section 3]{BorweinVanderwerffbook} or \cite[Chapter 2, Section 3]{Zalinescubook} for instance.

\begin{proposition}\label{elementarypropertiesconjugate} We have:
\item[$(i)$] $(ag)^*= ag^*( \frac{\cdot}{a})$ and $\left( ag( \frac{\cdot}{a}) \right)^*= a g^*$ for $a>0.$ 
\item[$(ii)$]If $\rho : \R \to \R$ is even, then $\left( \rho \circ \| \cdot \| \right)^* = \rho^* ( \| \cdot \| ) .$ 
\end{proposition}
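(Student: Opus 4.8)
The plan is to establish both identities directly from the definition $g^*(x) = \sup_{z \in X}\{\langle x, z\rangle - g(z)\}$, with the only genuinely geometric input reserved for part $(ii)$.

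For part $(i)$ I would simply factor and substitute. Writing out $(ag)^*(x) = \sup_{z \in X}\{\langle x, z\rangle - a g(z)\}$ and pulling the positive constant $a$ out of the supremum gives $a \sup_{z}\{\langle x/a, z\rangle - g(z)\} = a g^*(x/a)$, which is the first claimed formula. For the second, in $\left(a g(\tfrac{\cdot}{a})\right)^*(x) = \sup_{z}\{\langle x, z\rangle - a g(z/a)\}$ I would perform the change of variable $w = z/a$ (a bijection of $X$ since $a>0$); then $\langle x, z\rangle = a\langle x, w\rangle$ and the expression becomes $a\sup_w\{\langle x, w\rangle - g(w)\} = a g^*(x)$. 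Both computations are routine and require nothing beyond the positivity of $a$.

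For part $(ii)$ the key step is to slice the domain of the supremum along spheres of constant radius. Setting $t = \|z\|$, I would write
\[
(\rho \circ \|\cdot\|)^*(x) = \sup_{z \in X}\{\langle x, z\rangle - \rho(\|z\|)\} = \sup_{t \geq 0}\Big(\sup_{\|z\| = t}\langle x, z\rangle - \rho(t)\Big).
\]
In a Hilbert space the inner supremum is exactly $\|x\|\,t$: Cauchy--Schwarz gives $\langle x, z\rangle \leq \|x\|\,t$ for $\|z\| = t$, and equality is attained at $z = t\,x/\|x\|$ when $x \neq 0$ (and trivially when $x = 0$). Thus the quantity reduces to the one-dimensional expression $\sup_{t \geq 0}\{\|x\|\,t - \rho(t)\}$. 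Finally I would invoke that $\rho$ is even to replace the supremum over $t \geq 0$ by the supremum over all $t \in \R$: since $\|x\| \geq 0$, for any $t < 0$ the reflected point $-t > 0$ satisfies $\|x\|(-t) - \rho(-t) = -\|x\|t - \rho(t) \geq \|x\|t - \rho(t)$, so negative arguments never increase the value. Hence $\sup_{t \geq 0}\{\|x\|t - \rho(t)\} = \sup_{t \in \R}\{\|x\|t - \rho(t)\} = \rho^*(\|x\|)$, as desired.

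The only place demanding any care is this passage in $(ii)$ from a supremum over $X$ to a one-dimensional supremum. It rests on the identity $\sup_{\|z\|=t}\langle x, z\rangle = \|x\|\,t$, which holds precisely because the Hilbert norm coincides with its dual norm; in a general Banach space one would instead obtain $\rho^*$ evaluated at the dual norm of $x$. The evenness hypothesis on $\rho$ is exactly what is needed to match the remaining one-dimensional supremum over $[0,\infty)$ with the full-line definition of the scalar Fenchel conjugate $\rho^*$, and one should remember to treat the degenerate case $x = 0$ separately when invoking the equality case of Cauchy--Schwarz.
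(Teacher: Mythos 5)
Your proof is correct. Note that the paper does not actually prove this proposition: it states it and refers the reader to \cite{BorweinVanderwerffbook} and \cite{Zalinescubook}, so there is no in-paper argument to compare against; your computation is the standard one found in those references, and your handling of evenness in part $(ii)$ matches exactly the convention the paper records immediately after the statement (extending $\delta$ evenly to $\R$ so that $\delta^*(t)=\sup_{s\geq 0}\{ts-\delta(s)\}$ for $t\geq 0$). One small simplification in $(ii)$: you do not need the equality case of Cauchy--Schwarz, nor a separate treatment of $x=0$, because the inner quantity is itself a supremum and $\sup_{\|z\|=t}\langle x,z\rangle = t\|x\|$ follows directly from the definition of the norm (in the Hilbert case, self-duality of $\|\cdot\|$); attainment of the supremum is irrelevant. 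This observation also confirms your closing remark: the same slicing argument, with no attainment or reflexivity hypotheses, gives $\left(\rho\circ\|\cdot\|\right)^* = \rho^*\left(\|\cdot\|_*\right)$ on $X^*$ for any Banach space, which is the form implicitly behind the paper's later use of the identity.
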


Abusing of terminology, we will consider the Fenchel conjugate of nonnegative functions only defined on $[0,+\infty),$ say $\delta: [0,+\infty) \to [0,+\infty)$. In order to avoid problems, we will assume that all the functions involved are extended to all of $\R$ by setting $\delta(t)= \delta(-t)$ for $t<0$. Hence $\delta$ will be an even function on $\R$ and therefore
$$
\delta^*(t)= \sup_{ s\in \R} \lbrace ts-\delta(s) \rbrace =  \sup_{ s \geq 0} \lbrace ts-\delta(s) \rbrace ,\quad \text{for} \quad t \geq 0.
$$

\begin{proposition}\label{zalinescupropertiesomega}{\em [See \cite[Lemma 3.7.1, pg. 227]{Zalinescubook}.]}
We have that $\varphi^*(t)= \int_0^t \omega^{-1}(s) ds$ for all $t\geq 0$ and $\varphi(t)+\varphi^*(s)=ts$ if and only if $s= \omega(t).$ 
\end{proposition}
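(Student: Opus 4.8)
The statement to prove is Proposition \ref{zalinescupropertiesomega}, which asserts two things about $\varphi(t)=\int_0^t\omega(s)\,ds$:
\begin{enumerate}
\item $\varphi^*(t)=\int_0^t\omega^{-1}(s)\,ds$ for all $t\geq 0$;
\item the Young equality $\varphi(t)+\varphi^*(s)=ts$ holds if and only if $s=\omega(t)$.
\end{enumerate}

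The plan is to exploit the structure $\varphi'=\omega$ together with the standard theory of Fenchel conjugates of one-dimensional convex functions. First I would recall that, since $\omega$ is strictly increasing with $\omega(0)=0$ and $\omega(\infty)=\infty$, the function $\varphi$ is strictly convex and $C^1$ on $[0,\infty)$ with derivative $\omega$. Consequently, for fixed $s\geq 0$, the concave function $t\mapsto st-\varphi(t)$ is maximized exactly where its derivative vanishes, i.e. where $s-\varphi'(t)=s-\omega(t)=0$, which happens at the unique point $t=\omega^{-1}(s)$. This already gives the equality case in part (ii): the supremum defining $\varphi^*(s)=\sup_t\{st-\varphi(t)\}$ is attained precisely at $t=\omega^{-1}(s)$, equivalently at $s=\omega(t)$, and at no other point by strict convexity.

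Next I would compute $\varphi^*(s)$ explicitly to establish (i). Substituting the maximizer $t_0=\omega^{-1}(s)$ gives $\varphi^*(s)=s\,t_0-\varphi(t_0)=s\,\omega^{-1}(s)-\int_0^{t_0}\omega(u)\,du$. The cleanest way to evaluate this is the classical geometric integration-by-parts identity for inverse functions: for a strictly increasing function with $\omega(0)=0$,
$$
\int_0^{t_0}\omega(u)\,du+\int_0^{\omega(t_0)}\omega^{-1}(v)\,dv=t_0\,\omega(t_0).
$$
Setting $s=\omega(t_0)$ (so $t_0=\omega^{-1}(s)$) rearranges this to $s\,\omega^{-1}(s)-\int_0^{t_0}\omega(u)\,du=\int_0^{s}\omega^{-1}(v)\,dv$, which is exactly $\varphi^*(s)=\int_0^s\omega^{-1}(v)\,dv$. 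This identity is most transparent by interpreting both integrals as complementary areas under the graph of $\omega$ (the region under the curve up to $t_0$ and the region between the curve and the vertical axis up to height $s$) filling out the rectangle $[0,t_0]\times[0,s]$.

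Finally, having both (i) and the characterization of the maximizer, the Young equality follows: $\varphi(t)+\varphi^*(s)\geq ts$ always holds by definition of the conjugate, with equality iff $t$ attains the supremum in $\varphi^*(s)$, which by the first paragraph happens iff $s=\omega(t)$. I expect the main (and only mild) obstacle to be justifying the inverse-function area identity rigorously rather than merely pictorially: one should note that $\omega$, being concave and strictly increasing, is absolutely continuous on compact intervals, so the integration-by-parts step is valid, and $\omega^{-1}$ is likewise well-behaved; alternatively one can differentiate $\Phi(s):=\int_0^s\omega^{-1}(v)\,dv$ and check $\Phi'(s)=\omega^{-1}(s)=t_0$ matches the derivative of $s\mapsto\varphi^*(s)$, which by the envelope theorem equals the maximizer $\omega^{-1}(s)$, together with $\Phi(0)=\varphi^*(0)=0$. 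Since this is the cited Lemma 3.7.1 of \cite{Zalinescubook}, it suffices to reference it, but the above gives a self-contained argument.
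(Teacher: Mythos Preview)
Your argument is correct. Note, however, that the paper does not give its own proof of this proposition: it simply states the result and cites \cite[Lemma~3.7.1, p.~227]{Zalinescubook}. So there is nothing to compare against beyond the citation; your write-up supplies a self-contained justification (via the unique maximizer $t_0=\omega^{-1}(s)$ and the inverse-function area identity) where the paper is content to quote the reference.
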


\begin{definition}
A function $f : X \to \R$ is said to be uniformly convex, with modulus of convexity $\delta$ (being $\delta : [0,+ \infty) \to [0,+ \infty)$ a nondecreasing function with $\delta(0)=0$) provided that
$$
\lambda f(x)+(1-\lambda) f(y) \geq f( \lambda x +(1-\lambda)y) + \lambda(1-\lambda) \delta( \| x-y\|)
$$
for all $\lambda \in [0,1]$ and $x,y\in X.$ 
\end{definition}

\begin{theorem}\label{vladimirovuniformlyconvexintegral}{\em [See \cite[Theorem 3]{Vladimirov}.]} Let $X$ be a Hilbert space. If $\rho: [0,+\infty) \to [0,+\infty)$ is an increasing function with $\rho( ct) \geq c \rho(t)$ for all $c \geq 1$ and $t \geq 0,$ then the function $\Phi : X \to \R$ defined by $\Phi(x) = \int_0^{\| x\|} \rho(t)dt, \: x\in X,$ is uniformly convex, with modulus of convexity $\delta(t)=\int_0^t \rho( s/2 ) ds,\: t \geq 0.$ 
\end{theorem}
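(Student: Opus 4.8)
The plan is to reduce the Hilbert-space statement to a purely one-dimensional inequality. Write $\Psi(r)=\int_0^r\rho(t)\,dt$, so that $\Phi=\Psi(\|\cdot\|)$ and, after the substitution $u=s/2$, the claimed modulus becomes $\delta(t)=\int_0^t\rho(s/2)\,ds=2\Psi(t/2)$. I would first record two consequences of the hypothesis $\rho(ct)\ge c\rho(t)$ for $c\ge1$: it forces $\rho(0)=0$, and it is equivalent to $\mu(t):=\rho(t)/t$ being nondecreasing; since $\rho$ is increasing, $\Psi$ is convex with $\Psi(0)=0$, so $r\mapsto\Psi(r)/r$ is nondecreasing too. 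With $z=\lambda x+(1-\lambda)y$, the goal is then
\[
\lambda\Psi(\|x\|)+(1-\lambda)\Psi(\|y\|)-\Psi(\|z\|)\ \ge\ 2\lambda(1-\lambda)\,\Psi\big(\tfrac12\|x-y\|\big).
\]

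The key structural step is to fix $\alpha=\|x\|$, $\beta=\|y\|$ and $\lambda$, and exploit the Hilbert identity $\|z\|^2=\lambda\alpha^2+(1-\lambda)\beta^2-\lambda(1-\lambda)r^2$, where $r=\|x-y\|$ ranges over $[\,|\alpha-\beta|,\alpha+\beta\,]$ as the angle between $x$ and $y$ varies. Setting $\gamma(r)=\big(\lambda\alpha^2+(1-\lambda)\beta^2-\lambda(1-\lambda)r^2\big)^{1/2}$ and $R(r)=\lambda\Psi(\alpha)+(1-\lambda)\Psi(\beta)-\Psi(\gamma(r))-2\lambda(1-\lambda)\Psi(r/2)$, a direct differentiation (valid for smooth $\rho$; the general case follows by approximating $\rho$ from below by smooth functions $t\mapsto t\mu_n(t)$ with $\mu_n\uparrow\mu$ nondecreasing) gives
\[
R'(r)=\lambda(1-\lambda)\,r\,\big[\mu(\gamma(r))-\tfrac12\mu(r/2)\big].
\]
Since $\gamma(r)$ is decreasing and $\mu$ is nondecreasing, the bracket is nonincreasing, so $R'$ is nonnegative on an initial subinterval and nonpositive afterwards; thus $R$ is quasiconcave and attains its minimum on $[\,|\alpha-\beta|,\alpha+\beta\,]$ at an endpoint. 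Both endpoints correspond to \emph{collinear} configurations of $x,y$ (same ray when $r=|\alpha-\beta|$, opposite rays when $r=\alpha+\beta$), so it suffices to prove the inequality when $x$ and $y$ are collinear, i.e.\ the one-dimensional case.

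For the one-dimensional case I would treat the two endpoints separately. In the same-ray case, writing the points as $0\le\beta\le\alpha$, $w=\alpha-\beta$ and $z_0=\lambda\alpha+(1-\lambda)\beta$, one has the representation
\[
\lambda\Psi(\alpha)+(1-\lambda)\Psi(\beta)-\Psi(z_0)=\lambda\!\int_0^{(1-\lambda)w}\!\![\rho(z_0+v)-\rho(z_0)]\,dv+(1-\lambda)\!\int_0^{\lambda w}\!\![\rho(z_0)-\rho(z_0-v)]\,dv,
\]
whose integrands are nonnegative because $\rho$ is increasing, and which I would bound below using $\rho(s)=s\mu(s)$ and the monotonicity of $\mu$. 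The cleanest instance is the antipodal case $z_0=0$, where the inequality reads $\lambda\Psi((1-\lambda)w)+(1-\lambda)\Psi(\lambda w)\ge2\lambda(1-\lambda)\Psi(w/2)$: dividing by $\lambda(1-\lambda)$ and using that $\theta\mapsto\Psi(\theta w)/\theta$ is nondecreasing and nonnegative, one sees (taking $\lambda\le\tfrac12$) that $\Psi((1-\lambda)w)/(1-\lambda)\ge\Psi(w/2)/(1/2)=2\Psi(w/2)$, which already yields the bound. The main obstacle is to secure the remaining same-ray estimate with the \emph{exact} constant matching $2\Psi(w/2)$: here the factor must be extracted from the growth condition $\rho(ct)\ge c\rho(t)$, and this is precisely where that hypothesis is indispensable, since for a linear $\rho$ (excluded by it) the function $\Phi=\|\cdot\|$ is not even strictly convex and no positive modulus can survive. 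Once this one-variable estimate is established with the stated constant, the reduction above propagates it to arbitrary $x,y\in X$, completing the proof.
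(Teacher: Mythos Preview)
The paper does not prove this theorem at all: it is quoted from \cite{Vladimirov} as an external result and used as a black box in the proof of Lemma~\ref{constantnormomega}. So there is no ``paper's own proof'' to compare against.

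As for your attempt itself, the reduction step is sound. The computation $R'(r)=\lambda(1-\lambda)r\big[\mu(\gamma(r))-\tfrac12\mu(r/2)\big]$ is correct, and since the bracket is nonincreasing (being the difference of a nonincreasing and a nondecreasing function), $R'$ changes sign at most once from nonnegative to nonpositive, so $R$ is indeed quasiconcave on $[\,|\alpha-\beta|,\alpha+\beta\,]$ and is minimized at an endpoint. This reduction to the collinear case is a genuinely useful idea.

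However, the proof is incomplete precisely where you say it is. You have not established the one-dimensional inequality at either endpoint. For the opposite-ray endpoint you only treat the special subcase $z_0=0$; the generic opposite-ray configuration (where $\|z\|=|\lambda\alpha-(1-\lambda)\beta|\neq0$) is not handled. For the same-ray endpoint you write down an integral representation but do not carry out the estimate, and you yourself flag this as ``the main obstacle.'' Until these two one-variable inequalities are actually proved with the stated constant $2\Psi(w/2)$, the argument does not close. One small correction: your parenthetical remark that ``linear $\rho$'' is excluded by the hypothesis and yields $\Phi=\|\cdot\|$ is off. Linear $\rho(t)=at$ \emph{does} satisfy $\rho(ct)\ge c\rho(t)$ (with equality) and gives $\Phi=\tfrac{a}{2}\|\cdot\|^2$, which is uniformly convex; it is \emph{constant} nonzero $\rho$ that is excluded and would give $\Phi=c\|\cdot\|$.
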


For a mapping $G : E \to X$, where $E$ is a subset of $X$, we will denote 
$$
M_\omega(G)= \sup_{x \neq y ,\: x,y\in E} \frac{\| G(x)-G(y)\|}{\omega(\| x-y \|)}.
$$

\begin{proposition}\label{equivalenceomegadifferentiability}
Let $X$ be a Banach space. If $f:X\to\R$ is a continuous convex function and 
$$
f(x+h)+f(x-h)-2 f(x) \leq C \varphi( 2\| h \|), \quad \text{for all} \quad x,h \in X,
$$
then $f$ is of class $C^{1,\omega}(X)$ and $ \| D f(x)-D f(y) \| \leq 4 C \omega\left( 2 \|x-y\| \right) $ for all $x,y\in X.$  
\end{proposition}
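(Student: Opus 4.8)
The plan is to follow the scheme of Proposition \ref{characterizationdifferentiability}, replacing the quadratic control by the control furnished by $\varphi$. First I would establish that $f$ is (Fr\'echet) differentiable on $X$. By midpoint convexity the second difference $f(x+h)+f(x-h)-2f(x)$ is nonnegative, while the hypothesis together with the monotonicity of $\omega$ gives
$$
0 \le \frac{f(x+h)+f(x-h)-2f(x)}{\|h\|} \le \frac{C}{\|h\|}\int_0^{2\|h\|}\omega(s)\,ds \le 2C\,\omega(2\|h\|)\xrightarrow[h\to 0]{}0,
$$
since $\omega(0)=0$ and $\omega$ is continuous. As in Proposition \ref{characterizationdifferentiability}, this standard criterion for continuous convex functions yields differentiability of $f$ at every point.

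Next I would derive the one-sided estimate
$$
0 \le f(x+h)-f(x)-\langle Df(x),h\rangle \le C\varphi(2\|h\|)\qquad\text{for all }x,h\in X.
$$
The left inequality is the subgradient inequality for the convex differentiable function $f$. For the right one, note that $f(x-h)-f(x)+\langle Df(x),h\rangle\ge 0$ (again convexity), whence
$$
f(x+h)-f(x)-\langle Df(x),h\rangle \le f(x+h)+f(x-h)-2f(x) \le C\varphi(2\|h\|).
$$

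Finally I would deduce the modulus estimate for $Df$. Fix $x,y\in X$, put $v=Df(x)-Df(y)$ and $d=\|x-y\|$; I may assume $v\neq 0$. Bounding $f(x+h)$ from above by the estimate just proved at $x$, and from below by the subgradient inequality at $y$, and then estimating $f(x)-f(y)-\langle Df(y),x-y\rangle\le C\varphi(2d)$ by the same one-sided inequality (at $y$, with increment $x-y$), I obtain
$$
\langle -v,h\rangle \le C\varphi(2d)+C\varphi(2\|h\|)\qquad\text{for all }h\in X.
$$
The decisive step is the choice of both the direction and the scale of $h$: using $\|v\|=\sup_{\|u\|=1}\langle -v,u\rangle$, for each $\eta\in(0,1)$ I pick a unit vector $u$ with $\langle -v,u\rangle\ge\eta\|v\|$ and set $h=d\,u$, so that $\|h\|=d$. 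This turns the inequality into $\eta d\|v\|\le 2C\varphi(2d)$, and since $\varphi(2d)=\int_0^{2d}\omega(s)\,ds\le 2d\,\omega(2d)$ by monotonicity of $\omega$, letting $\eta\to 1$ gives $\|v\|\le 4C\,\omega(2d)$, which is precisely the asserted bound. This last estimate shows in particular that $Df$ is (uniformly) continuous, so $f$ is of class $C^{1,\omega}(X)$.

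The step I expect to be the main obstacle is the passage from the scalar second-difference control to a bound on the full dual norm $\|Df(x)-Df(y)\|$. The one-sided estimate only squeezes $f$ between its tangent planes, so one must combine the estimates at the two points $x$ and $y$ and then test against a direction aligned with $v=Df(x)-Df(y)$; the point is that choosing the scale $\|h\|=d=\|x-y\|$, rather than letting $h\to 0$, is what produces the clean constant $4C$ and the argument $2\|x-y\|$ inside $\omega$.
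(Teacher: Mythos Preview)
Your proof is correct and follows essentially the same path as the paper's: both establish differentiability from the second-difference criterion, then use convexity to bound $(Df(x)-Df(y))(h)$ in terms of $\varphi$ and test with $\|h\|=\|x-y\|$. The only organizational difference is that you first isolate the one-sided tangent estimate $f(x+h)-f(x)-\langle Df(x),h\rangle\le C\varphi(2\|h\|)$ and combine two applications (yielding $2C\varphi(2d)$, bounded via monotonicity of $\omega$), whereas the paper runs a single chain of convexity inequalities ending at $C\varphi(4d)$ and then uses concavity of $\omega$ (Jensen) to obtain $\varphi(4d)/(4d)\le\omega(2d)$; both routes give the identical bound $4C\,\omega(2\|x-y\|)$.
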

\begin{proof}
The inequality of the assumption together with the continuity of $f$ proves the existence of $D f.$ Consider $x,y, h \in X$ with $\| h \| = \| x-y\|.$ Using repeatedly the convexity of $f$ and then the assumption, we get
\begin{align*}
( D f(x)- D f(y))( h ) &  \leq  f(x+h)-f(x)-Df(y)(h) \\
&  \leq f(x+h)-f(x)+f(x)-f(y)-Df(y)(x-y)-Df(y)(h) \\
&  \leq f(x+h)-f(y)-Df(y)(x+h-y) \\
&  \leq f(x+h)-f(y)-f(2y-x-h)-f(y) \\
& \leq f( y + (x+h-y) ) + f( y-(x+h-y) ) -2 f(y) \\
& \leq C  \varphi\left( 2 \| x+h-y\| \right) \leq C \varphi \left( 4 \|x-y\| \right).
\end{align*}
Thus 
$$
\| D f(x)-D f(y)\| \leq  4C \:  \frac{\varphi(4\|x-y\|)}{4\|x-y\|}.
$$
Note that, by concavity of $\omega,$ it follows that
$$
\frac{\varphi(t)}{t} = \int_0^1 \omega(tu) du \leq \omega \left( \frac{t}{2} \right) \quad t\geq 0.
$$
Therefore $ \| D f(x)-D f(y) \| \leq 4 C \omega\left( 2 \|x-y\| \right) .$
\end{proof}

\begin{lemma}\label{constantnormomega}
Let $(X, \|\cdot\|)$ be a Hilbert space, and $\varphi$ be defined by \eqref{definition of varphi}. Then the function $\psi(x)= \varphi(\| x\|), \: x\in X$, satisfies the following inequality
$$
\psi(x+h)+\psi(x-h)-2 \psi(x) \leq \psi(2h) \quad \text{for all} \quad x,h \in X.
$$
Also, $\psi $ is of class $C^{1,\omega}(X)$ with $ \| \nabla \psi(x)-\nabla \psi(y) \| \leq 4 \omega( 2 \|x-y\| )$ for all $x,y \in X.$ 
\end{lemma}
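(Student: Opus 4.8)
The plan is to reduce the claimed second-order inequality to a one-dimensional statement about $\varphi$ by exploiting the Hilbert space structure, and then to obtain the $C^{1,\omega}$ conclusion as an immediate consequence of Proposition \ref{equivalenceomegadifferentiability}.

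First I would introduce the auxiliary function $\Phi(v)=\varphi(\sqrt{v})$ for $v\geq 0$ and observe that $\psi(x)=\Phi(\|x\|^2)$. Since $\varphi'=\omega$, on $(0,+\infty)$ one has $\Phi'(v)=\omega(\sqrt{v})/(2\sqrt{v})$; because $\omega$ is concave with $\omega(0)=0$, the ratio $t\mapsto \omega(t)/t$ is non-increasing (this is exactly the inequality $\omega(ct)\geq c\,\omega(t)$ for $c\leq 1$ recorded at the start of the section), and composing with the increasing map $v\mapsto\sqrt{v}$ shows that $\Phi'$ is non-increasing. Hence $\Phi$ is concave on $[0,+\infty)$ (extending to $v=0$ by continuity). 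This concavity is the crux of the whole argument.

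Next I would rewrite the second difference in terms of $\Phi$. Setting $p=\|x+h\|^2$, $q=\|x-h\|^2$, $r=\|x\|$ and $s=\|h\|$, the parallelogram law gives $p+q=2r^2+2s^2$, so $\tfrac{p+q}{2}=r^2+s^2$. Using midpoint concavity of $\Phi$, and then the decreasing-increments property of concave functions (applied with increment $s^2$ at the points $r^2\geq 0$, together with $\Phi(0)=0$), I obtain
\[
\psi(x+h)+\psi(x-h)-2\psi(x)=\Phi(p)+\Phi(q)-2\Phi(r^2)\leq 2\Phi(r^2+s^2)-2\Phi(r^2)\leq 2\Phi(s^2)=2\varphi(s).
\]
Finally, the elementary estimate $2\varphi(s)\leq\varphi(2s)$, which follows from $\int_0^s\omega\leq\int_s^{2s}\omega$ by monotonicity of $\omega$, yields $\psi(x+h)+\psi(x-h)-2\psi(x)\leq\varphi(2s)=\psi(2h)$, establishing the first inequality.

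For the last assertion, note that $\psi$ is continuous and convex, being the convex increasing function $\varphi$ composed with the norm. Since $\psi(2h)=\varphi(2\|h\|)$, the inequality just proved is precisely the hypothesis of Proposition \ref{equivalenceomegadifferentiability} with $C=1$; applying that proposition gives $\psi\in C^{1,\omega}(X)$ together with $\|\nabla\psi(x)-\nabla\psi(y)\|\leq 4\,\omega(2\|x-y\|)$ for all $x,y\in X$. I expect the only delicate point to be the rigorous verification that $\Phi=\varphi(\sqrt{\,\cdot\,})$ is concave, equivalently that $\omega(t)/t$ is non-increasing; once that is in place, everything downstream is a short concatenation of two concavity estimates, the parallelogram identity, and one monotonicity estimate.
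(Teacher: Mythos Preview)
Your argument is correct and takes a genuinely different, considerably more elementary route than the paper. The paper computes $\psi^*$ via Proposition \ref{zalinescupropertiesomega}, invokes Vladimirov's theorem (Theorem \ref{vladimirovuniformlyconvexintegral}) to bound the modulus of convexity $\delta_{\psi^*}$ from below, and then passes back to a bound on the modulus of smoothness $\rho_\psi$ using the smoothness--convexity duality of \cite[Theorem 5.4.1(a)]{BorweinVanderwerffbook}; only after this chain of Fenchel-duality and uniform-convexity machinery does the desired second-order inequality emerge. You bypass all of that by observing that $\Phi(v)=\varphi(\sqrt{v})$ is concave (equivalently, $\omega(t)/t$ is non-increasing), then using the parallelogram identity together with midpoint concavity and subadditivity of a concave function vanishing at $0$, followed by the monotonicity bound $2\varphi(s)\leq\varphi(2s)$. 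Your proof is entirely self-contained and uses no external references beyond Proposition \ref{equivalenceomegadifferentiability} for the final $C^{1,\omega}$ conclusion; the paper's approach, while heavier, situates the inequality within a broader duality framework that could in principle be adapted to other moduli or geometries, though in this specific instance your direct argument is clearly preferable.
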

\begin{proof}
By combining the fact that $(\rho \circ \| \cdot \| )^*= \rho^*( \| \cdot \|)$ for any even $\rho: [0,+\infty) \to [0,+\infty)$ (see Proposition \ref{elementarypropertiesconjugate} and the subsequent comment) with Proposition \ref{zalinescupropertiesomega}, we obtain that $\psi^*(x)= \int_0^{\|x\|} \omega^{-1}(s)ds, \: x\in X,$ where $\omega^{-1}$ is a convex function. Thus, we can apply Theorem \ref{vladimirovuniformlyconvexintegral} with $\rho=\omega^{-1}$ and $\Phi= \psi^*$ to deduce that
$$
\lambda \psi^*(x)+(1-\lambda) \psi^*(y) \geq \psi^*( \lambda x +(1-\lambda)y) + \lambda(1-\lambda) \delta( \| x-y\|),
$$
for all $x,y\in X, \:\lambda \in [0,1],$ where $\delta(t)= \int_0^t \omega^{-1}\left( \frac{s}{2} \right) ds, \: t \geq 0.$ Then it is clear that
\begin{align*}
\delta_{\psi^*}(\varepsilon):& = \inf \left\lbrace \frac{1}{2} \psi^*(x)+\frac{1}{2}\psi^*(y)-\psi^* \left( \frac{x+y}{2} \right) \: : \|x-y\| \geq \varepsilon, \: x,y\in X \right\rbrace \\
& \geq \inf \left\lbrace \frac{1}{4} \delta( \|x-y\| ) \: : \|x-y \| \geq \varepsilon, \: x,y\in X \right\rbrace \geq \frac{1}{4} \delta( \varepsilon ) 
\end{align*}
for all $\varepsilon \geq 0.$ Let us denote
$$
\rho_{\psi}(t) := \sup \left\lbrace \frac{1}{2}\psi(x+ty)+\frac{1}{2}\psi(x-ty)-\psi(x) \: : x,y \in X, \: \|y\|=1 \right\rbrace
$$
for all $t\geq 0.$
Since $\psi$ is continuous and convex on $X,$ we can use \cite[Theorem 5.4.1(a), pg. 252]{BorweinVanderwerffbook} to deduce 
$$
\rho_{\psi}(t) = \sup \left\lbrace t \tfrac{\varepsilon}{2} - \delta_{\psi^*}(\varepsilon ) \: : \varepsilon \geq 0 \right\rbrace, \quad t \geq 0.
$$
Applying the preceding estimation to $\delta_{\psi^*}$ we see that
$$
\rho_{\psi}(t) \leq \tfrac{1}{2} \sup \left\lbrace t \varepsilon - \tfrac{1}{2} \delta(\varepsilon) \: : \varepsilon \geq 0 \right\rbrace = \tfrac{1}{2}\left( \tfrac{1}{2} \delta \right)^*(t), \quad t \geq 0.
$$
By definition of $\delta$ it is clear that $\frac{1}{2} \delta(t)= \int_0^{t/2} \omega^{-1}(s)ds.$ Using Proposition \ref{elementarypropertiesconjugate} together with Proposition \ref{zalinescupropertiesomega} we have that $ \left( \tfrac{1}{2} \delta \right)^*(t)= \int_0^{2t} \omega(s)ds, \: t \geq 0.$ Then it follows
$$
\rho_{\psi}(t) \leq \frac{1}{2} \int_0^{2t} \omega(s)ds, \quad t \geq 0
$$
and therefore
$$
\psi(x+ty)+\psi(x-ty)-2\psi(x) \leq \int_0^{2t} \omega(s) ds, \quad \textrm{ for all } t\geq 0,\: x,y\in X,\: \textrm{ with } \|y\|=1,
$$
which is equivalent to the desired inequality. The second part follows from Proposition \ref{equivalenceomegadifferentiability}.

\end{proof}

\begin{definition}\label{definitionconditioncw1omega}
Given an arbitrary subset $E$ of a Hilbert space $X$, and a $1$-jet $f: E \to \R, \: G : E \to X,$ we will say that $(f,G)$ satisfies condition $(CW^{1,\omega})$ on $E$ with constant $M>0,$ provided that
$$
f(x) \geq f(y)+ \langle G(y), x-y \rangle + M \varphi^*\left(\frac{1}{M} \| G(x)-G(y)\| \right) \quad \text{for all} \quad x,y\in E. \eqno (CW^{1, \omega})
$$
\end{definition}

\begin{remark}\label{remarkrewritingcw1omega} We have:
\item[$(i)$]
If $(f,G)$ satisfies $(CW^{1,\omega})$ on $E$ with constant $M,$ then
$$
\| G(x)-G(y) \| \leq 2M \omega \left( \frac{\|x-y\|}{2} \right) \quad x,y \in E.
$$
In particular $M_\omega(G) \leq 2M.$
\item[$(ii)$] The inequality defining condition $(CW^{1,\omega})$ can be rewritten as
$$
f(x) \geq f(y)+ \langle G(y), x-y \rangle + (M \varphi)^*\left( \| G(x)-G(y)\| \right) \quad \text{for all} \quad x,y\in E,
$$
\end{remark}

\begin{proof}
\item[$(i)$] We fix $x,y\in E$ and set $t = \frac{1}{M} \| G(x)-G(y)\|.$ We have that
$$
M \varphi^*\left( t \right) = \| G(x)-G(y)\| \frac{\varphi^*(t)}{t}.
$$
Using first Proposition \ref{zalinescupropertiesomega} and then Jensen's inequality (recall that $\omega^{-1}$ is a convex function) we obtain
$$
\frac{\varphi^*(t)}{t} = \int_0^1 \omega^{-1}(tu) du \geq \omega^{-1}\left( \frac{t}{2} \right) = \omega^{-1} \left( \frac{1}{2M} \| G(x)-G(y)\| \right)
$$
and then
$$
M \varphi^*\left(\frac{1}{M} \| G(x)-G(y)\| \right) \geq \| G(x)-G(y)\| \omega^{-1} \left( \frac{1}{2M} \| G(x)-G(y)\| \right).
$$
Now, using the inequality defining the condition $(CW^{1,\omega})$ we have
\begin{align*}
f(x) \geq f(y)+ \langle G(y), x-y \rangle + M \varphi^*\left(\frac{1}{M} \| G(x)-G(y)\| \right) \\
f(y) \geq f(x)+ \langle G(x), y-x \rangle + M \varphi^*\left(\frac{1}{M} \| G(x)-G(y)\| \right)
\end{align*}
hence
\begin{align*}
\langle  G(x)-G(y), & x-y \rangle  \geq 2M \varphi^*\left(\frac{1}{M} \| G(x)-G(y)\| \right) \\
& \geq \| G(x)-G(y)\| \omega^{-1} \left( \frac{1}{2M} \| G(x)-G(y)\| \right).
\end{align*}
We conclude that 
$$
\| G(x)-G(y) \| \leq 2M \omega \left( \frac{\|x-y\|}{2} \right).
$$
\item[$(ii)$] This follows from elementary properties of the conjugate of a function; see Proposition \ref{elementarypropertiesconjugate}.
\end{proof}

\begin{remark}
In \cite{AM}, one can find an alternative formulation of the condition $(CW^{1,\omega})$ for a $1$-jet $(f,G)$ on $E$, namely:
\begin{equation}\label{oldcw1omega}
f(x) \geq f(y)+ \langle G(y), x-y \rangle + \| G(x)-G(y)\| \omega^{-1}\left( \frac{1}{2M} \|G(x)-G(y)\| \right)
\end{equation}
for all $x,y\in E.$ If we denote the above condition by $\widetilde{(CW^{1,\omega})},$ we have that $\widetilde{(CW^{1,\omega})}$ and $(CW^{1,\omega})$ are actually equivalent.
\end{remark}
\begin{proof}
Since $\omega^{-1}$ is convex, we have that 
\begin{equation}\label{comparingomega-11}
\varphi^*(t)=\int_0^t \omega^{-1}(s) ds \geq t \omega^{-1}\left( t/2 \right) \quad \text{for all} \quad t \geq 0.
\end{equation}
On the other hand, because $\omega^{-1}$ is increasing we easily obtain 
\begin{equation}\label{comparingomega-12}
\varphi^*(t) \leq t \omega^{-1}(t) \quad \text{for all} \quad t\geq 0.
\end{equation}
Taking first $t=\tfrac{1}{M}\|G(x)-G(y)\|$ in \eqref{comparingomega-11} and then $t=\tfrac{1}{2M}\|G(x)-G(y)\|$ in \eqref{comparingomega-12} and also bearing in mind Proposition \ref{elementarypropertiesconjugate} $(i)$ we easily obtain
\begin{align*}
(M \varphi)^* \left( \| G(x)-G(y)\| \right) & \geq \|G(x)-G(y)\| \omega^{-1}\left( \frac{1}{2M}\| G(x)-G(y)\| \right) \\
& \geq (2 M\varphi)^*(\|G(x)-G(y)\|).
\end{align*}
By comparing condition $(CW^{1,\omega})$ (Definition \ref{definitionconditioncw1omega}) with $\widetilde{(CW^{1,\omega})}$ (inequality \eqref{oldcw1omega}) we then see that both conditions are equivalent.
\end{proof}

Let us now see that $(CW^{1, \omega})$ is a necessary condition for $C^{1, \omega}$ convex extension of $1$-jets.
\begin{proposition}\label{necessityconditioncw1omega}
Let $f\in C^{1,\omega}(X)$ be convex, and assume that $f$ is not affine. Then the $1$-jet $(f,\nabla f)$ satisfies the condition $(CW^{1,\omega})$ with constant $M>0$ on $E=X$, where 
$$
M=\sup_{x, y\in X, \, x\neq y}\frac{\|\nabla f(x)-\nabla f(y)\|}{\omega(\|x-y\|)}.
$$
\end{proposition}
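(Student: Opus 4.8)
The plan is to reduce the statement to a one-dimensional steepest-descent estimate for the nonnegative convex function obtained by subtracting the supporting affine function at $y$. Fix $x,y\in X$ and set
$$
h(z)=f(z)-f(y)-\langle \nabla f(y),z-y\rangle,\qquad z\in X.
$$
First I would record the elementary properties of $h$: it is convex and of class $C^{1,\omega}$ with $\nabla h(z)=\nabla f(z)-\nabla f(y)$; by the first-order convexity inequality $f(z)\ge f(y)+\langle\nabla f(y),z-y\rangle$ we have $h\ge 0$ on $X$ with $h(y)=0$, so that $y$ is a global minimizer and $\nabla h(y)=0$; and, by the very definition of $M$, the gradient obeys $\|\nabla h(u)-\nabla h(w)\|=\|\nabla f(u)-\nabla f(w)\|\le M\,\omega(\|u-w\|)$ for all $u,w\in X$. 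Since $f$ is not affine, $M\in(0,\infty)$, so every expression below is well defined, and the desired conclusion $(CW^{1,\omega})$ for the jet $(f,\nabla f)$ is exactly
$$
h(x)\ge M\varphi^*\Big(\tfrac{1}{M}\|\nabla h(x)\|\Big).
$$

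Next I would prove this inequality by moving from $x$ in the direction of steepest descent of $h$. Put $s=\|\nabla h(x)\|$; if $s=0$ the right-hand side equals $M\varphi^*(0)=0\le h(x)$ and we are done, so assume $s>0$ and let $u=\nabla h(x)/s$, a unit vector. As $h$ is $C^1$, the map $t\mapsto h(x-tu)$ is $C^1$ and the fundamental theorem of calculus gives
$$
h(x-tu)=h(x)-\int_0^t \langle \nabla h(x-\tau u),u\rangle\,d\tau,\qquad t\ge 0.
$$
Writing $\langle \nabla h(x-\tau u),u\rangle=\langle\nabla h(x),u\rangle-\langle \nabla h(x)-\nabla h(x-\tau u),u\rangle$ and using $\langle\nabla h(x),u\rangle=s$ together with the Cauchy--Schwarz inequality and the modulus estimate $\|\nabla h(x)-\nabla h(x-\tau u)\|\le M\omega(\tau)$ (here $\|x-(x-\tau u)\|=\tau$), I obtain $\langle \nabla h(x-\tau u),u\rangle\ge s-M\omega(\tau)$. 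Substituting and recalling $\varphi(t)=\int_0^t\omega(s)\,ds$, and then invoking $h\ge 0$, this yields
$$
0\le h(x-tu)\le h(x)-st+M\varphi(t)\qquad\text{for every }t\ge 0.
$$

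Finally I would optimize in $t$. The last display says $h(x)\ge st-M\varphi(t)$ for every $t\ge 0$, so taking the supremum over $t\ge 0$ and using the definition of the Fenchel conjugate,
$$
h(x)\ge \sup_{t\ge 0}\big(st-M\varphi(t)\big)=(M\varphi)^*(s),
$$
and by Proposition \ref{elementarypropertiesconjugate}$(i)$ we have $(M\varphi)^*(s)=M\varphi^*(s/M)=M\varphi^*\big(\tfrac{1}{M}\|\nabla f(x)-\nabla f(y)\|\big)$, which is precisely $(CW^{1,\omega})$. I do not expect a genuine obstacle here: the only points demanding care are verifying that $y$ is a \emph{global} minimizer of $h$ (so that $h\ge 0$ may legitimately be inserted after the descent step) and correctly converting the scalar estimate into the conjugate through its supremum representation. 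The $C^{1,1}$ result (Proposition \ref{necessityconditioncw11}) and the $C^{1,\alpha}$ Hölder cases are recovered simply by specializing $\omega$, for which $\varphi$ and $\varphi^*$ become explicit powers.
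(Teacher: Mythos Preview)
Your proof is correct and follows essentially the same idea as the paper's: subtract the supporting affine function at $y$ to get a nonnegative convex $h$ with $\omega$-continuous gradient, move from $x$ in the direction $-\nabla h(x)/\|\nabla h(x)\|$, bound the decrease via the modulus estimate, and invoke $h\ge 0$. The paper organizes this by contradiction, first reducing (Cases 2 and 3) to $M=1$, $f(y)=0$, $\nabla f(y)=0$, and then plugging in the single value $t=\omega^{-1}(\|\nabla f(x)\|)$ at which Fenchel--Young is an equality (Proposition~\ref{zalinescupropertiesomega}); you instead argue directly for general $M$ and take the supremum over $t\ge 0$ to recognize $(M\varphi)^*$. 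Your packaging is slightly more economical, avoiding both the case split and the contradiction, but the underlying steepest-descent estimate is identical.
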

\noindent On the other hand, if $f$ is affine, it is obvious that $(f, \nabla f)$ satisfies $(CW^{1,1})$ on every $E\subset X$, for every $M>0$. 
\begin{proof}
Suppose that there exist different points $x, y\in X$ such that
$$
f(x)-f(y)-\langle \nabla f(y), x-y\rangle < M \varphi^*\left(\frac{1}{M} \| \nabla f(x)-\nabla f(y)\| \right),
$$
and we will get a contradiction.

\noindent {\bf Case 1.} Assume further that $M=1$, $f(y)=0$, and $\nabla f(y)=0$.
By convexity this implies $f(x)\geq 0$.
Then we have 
$$
0\leq f(x)<\varphi^*\left( \| \nabla f(x)\| \right).
$$
Set 
$$
v=-\frac{1}{\|\nabla f(x)\|}\nabla f(x),
$$
and define 
$$
h(t)=f(x+tv)
$$
for every $t\in\R$. We have $h(0)=f(x)$, $h'(0)=-\| \nabla f(x)\|$, and $h'(t)=\langle \nabla f(x+tv), v\rangle.$ This implies that 
$$
\big | h(t)-f(x)+\| \nabla f (x)\|t \big |\leq \int_0^t \omega(s) ds = \varphi(t)
$$
for every $t\in\R^{+}$, hence also that 
$$
h(t)\leq -\|\nabla f(x) \| t+f(x)+\varphi(t) \textrm{ for all } t\in \R^{+}.
$$
By using the assumption on $f(x)$ and Proposition \ref{zalinescupropertiesomega} we have
\begin{align*}
& f\left( x+ \omega^{-1}(\| \nabla f(x)\| ) v\right)\\
& <  \varphi^*( \| \nabla f(x)\| ) - \| \nabla f(x) \| \omega^{-1}( \| \nabla f(x) \| ) + \varphi \left( \omega^{-1}( \| \nabla f(x) \| ) \right)=0,
\end{align*}
which is in contradiction with the assumptions that $f$ is convex, $f(y)=0$, and $\nabla f(y)=0$. This shows that
$$
f(x)\geq \varphi^*\left( \| \nabla f(x)\| \right).
$$

\noindent {\bf Case 2.} Assume only that $M=1$. Define
$$
g(z)=f(z)-f(y)-\langle \nabla f(y), z-y\rangle
$$
for every $z\in X$. Then $g(y)=0$ and $\nabla g(y)=0$. By Case 1, we get
$$
g(x)\geq \varphi^*\left( \|\nabla g(x)\| \right),
$$
and since $\nabla g(x)=\nabla f(x)-\nabla f(y)$ the Proposition is thus proved in the case when $M=1$.

\noindent {\bf Case 3.} In the general case, we may assume $M>0$ (the result is trivial for $M=0$). Consider $g=\frac{1}{M}f$, which satisfies the assumption of Case 2. Therefore
$$
g(x)-g(y)-\langle \nabla g(y), x-y\rangle \geq \varphi^*\left( \|\nabla g(x)-\nabla g(y)\| \right),
$$
which is equivalent to the desired inequality.
\end{proof}

Let us now present the main result of this section.
\begin{theorem}\label{theoremformulac1omegaconvex}
Given a $1$-jet $(f,G)$ defined on $E$ satisfying the property $(CW^{1,\omega})$ with constant $M$ on $E,$ the formula
$$
F=\textrm{conv}(g), \quad g(x) = \inf_{y \in E} \lbrace f(y)+\langle G(y), x-y \rangle + M \varphi \left( \|x-y\| \right) \rbrace, \quad x\in X,
$$
defines a $C^{1,\omega}$ convex function with $F_{|_E}=f$ and $(\nabla F)_{|_E}  =G$, and 
$$
\| \nabla F(x)-\nabla F(y) \| \leq 4M \omega\left( 2 \| x-y\| \right) \quad \text{for all} \quad x,y \in X.
$$
In particular, $M_\omega( \nabla F) \leq 8 M.$
\end{theorem}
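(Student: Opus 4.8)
The plan is to follow the architecture of the proof of Theorem~\ref{theoremformulaC11convex}, replacing the parallelogram identity that drove the $C^{1,1}$ estimate by the refined second-difference bound for $\varphi(\|\cdot\|)$ supplied by Lemma~\ref{constantnormomega}, and replacing the concluding appeal to Proposition~\ref{characterizationdifferentiability} by Proposition~\ref{equivalenceomegadifferentiability}. Concretely, there are four steps: a one-sided estimate for $g$, its transfer to $F=\textrm{conv}(g)$, the identification of a convex minorant (this is where $(CW^{1,\omega})$ is used), and the reading-off of the interpolation data.

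First I would establish the one-sided estimate
$$
g(x+h)+g(x-h)-2g(x) \leq M\varphi(2\|h\|) \quad \text{for all} \quad x,h\in X.
$$
Exactly as in Lemma~\ref{estimationinfimumc11}, given $x,h$ and $\varepsilon>0$ I pick $y\in E$ nearly attaining $g(x)$ and bound each of $g(x\pm h)$ by the value of the defining expression at that same $y$; the $f(y)$ and linear terms cancel, leaving $M\left(\varphi(\|(x-y)+h\|)+\varphi(\|(x-y)-h\|)-2\varphi(\|x-y\|)\right)+2\varepsilon$. Writing $\psi=\varphi(\|\cdot\|)$ and $u=x-y$, this equals $M\left(\psi(u+h)+\psi(u-h)-2\psi(u)\right)+2\varepsilon$, which Lemma~\ref{constantnormomega} bounds by $M\psi(2h)+2\varepsilon=M\varphi(2\|h\|)+2\varepsilon$; letting $\varepsilon\to0$ gives the claim.

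Next I would transfer this estimate to $F=\textrm{conv}(g)$. The proof of Theorem~\ref{convexenvelopeconstant} uses only that the right-hand side of the second-difference bound depends on $h$ alone and not on the base point, so it applies verbatim with $M\|h\|^2$ replaced by $M\varphi(2\|h\|)$, giving that $F$ is continuous and convex with $F(x+h)+F(x-h)-2F(x)\leq M\varphi(2\|h\|)$. For $\textrm{conv}(g)$ to be well defined I must first exhibit a convex lsc minorant of $g$: I set $m(x)=\sup_{z\in E}\{f(z)+\langle G(z),x-z\rangle\}$, which is convex and lower semicontinuous as a supremum of affine functions, and I prove $m\leq g$. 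This is the $\omega$-analogue of Lemma~\ref{minimalsmallerthaninf}, and it is the step I expect to be the main obstacle, since it is the only place where $(CW^{1,\omega})$ enters: starting from $(CW^{1,\omega})$, the surplus reduces to $\langle G(y)-G(z),x-y\rangle+M\varphi^*\!\left(\tfrac{1}{M}\|G(y)-G(z)\|\right)+M\varphi(\|x-y\|)$, which is nonnegative by Cauchy--Schwarz together with the Fenchel--Young inequality $ab\leq M\varphi(a)+M\varphi^*(b/M)$ applied to $a=\|x-y\|$ and $b=\|G(y)-G(z)\|$. With $F=\textrm{conv}(g)$ now known to be continuous, convex, and to satisfy the $\varphi$-second-difference estimate, Proposition~\ref{equivalenceomegadifferentiability} yields $F\in C^{1,\omega}(X)$ with $\|\nabla F(x)-\nabla F(y)\|\leq 4M\omega(2\|x-y\|)$.

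Finally I would extract the interpolation data exactly as in Theorem~\ref{theoremformulaC11convex}. The chain $f\leq m\leq g\leq f$ on $E$ forces $g=f$ on $E$; since $m$ is convex we have $m=\textrm{conv}(m)\leq\textrm{conv}(g)=F\leq g$, and as $m=g=f$ on $E$ this gives $F=f$ on $E$. Because $m\leq F$ with equality on $E$, $m$ convex and $F$ differentiable, $m$ is differentiable on $E$ with $\nabla m=\nabla F$ there, while $G(x)\in\partial m(x)$ by definition of $m$; hence $\nabla F=G$ on $E$. For the last assertion, the stated subadditivity $\omega(2t)\leq 2\omega(t)$ (the case $c=2\geq1$) turns the gradient bound into $\|\nabla F(x)-\nabla F(y)\|\leq 8M\omega(\|x-y\|)$, that is, $M_\omega(\nabla F)\leq 8M$.
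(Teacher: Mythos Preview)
Your proposal is correct and follows essentially the same approach as the paper: the same one-sided estimate for $g$ via Lemma~\ref{constantnormomega}, the same transfer to $F=\textrm{conv}(g)$ as in Theorem~\ref{convexenvelopeconstant}, the same minorant $m$ with $m\leq g$ proved via $(CW^{1,\omega})$ and the Fenchel--Young inequality (the paper's Proposition~\ref{youngfenchelinequality}), and the same conclusion through Proposition~\ref{equivalenceomegadifferentiability} and the subdifferential argument. The only cosmetic difference is the order in which you present the second-difference bound and the minorant lemma.
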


For the proof we will use the following auxiliary results.
\begin{proposition}[Generalized Young's inequality for the Fenchel conjugate]\label{youngfenchelinequality}
Let $\rho:\R \to \R $ be a convex function. Then
$$
ab \leq \rho(a) + \rho^*(b) \quad \text{for all} \quad a,b>0.
$$
\end{proposition}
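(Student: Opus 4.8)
The plan is to read the inequality straight off the definition of the Fenchel conjugate, so that essentially no computation is needed. Recall that for $\rho:\R\to\R$ the conjugate is
$$
\rho^*(b)=\sup_{s}\lbrace bs-\rho(s)\rbrace,
$$
the supremum being taken over the domain of $\rho$ (in the even-extension convention adopted in this section, over $s\geq 0$ whenever $b\geq 0$). The single fact I would use is that a supremum dominates each of its members.

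Concretely, fixing $a,b>0$, I would test the supremum defining $\rho^*(b)$ at the one point $s=a$, a choice that is admissible precisely because $a>0$. This gives
$$
\rho^*(b)\geq ba-\rho(a),
$$
and transposing $\rho(a)$ yields $ab\leq\rho(a)+\rho^*(b)$, which is the claim. As $a,b>0$ were arbitrary, the proof is finished.

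There is no genuine obstacle here: the statement is the defining property of $\rho^*$ in disguise. The only remarks worth making are that convexity of $\rho$ is not actually needed for the inequality itself — it holds verbatim for any $\rho$ whose conjugate is defined — and that the restriction $a,b>0$ serves only to keep $s=a$ within the range of the supremum under the even-function convention, and to match the way the estimate is applied later (with $\rho=M\varphi$, evaluated at norms of vectors). I would nonetheless keep the convexity hypothesis in the statement, since it is the regime in which the estimate is used and where moreover $\rho^{**}=\rho$ holds, so that the inequality can be read symmetrically in $\rho$ and $\rho^*$.
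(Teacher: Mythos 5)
Your proof is correct: the inequality is immediate from the definition of $\rho^*$ by testing the supremum at $s=a$, and your observation that convexity of $\rho$ is not needed is accurate. The paper states this proposition without proof, treating it as standard, and your one-line argument is exactly the canonical one it implicitly relies on.
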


\begin{lemma}\label{minimalsmallerthaninfalpha}
We have 
$$
f(z)+ \langle G(z), x-z \rangle \leq f(y)+\langle G(y), x-y \rangle + M \varphi( \|x-y\|) 
$$
for every $y,z\in E, \: x\in X.$ 
\end{lemma}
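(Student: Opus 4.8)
The plan is to mirror the proof of Lemma~\ref{minimalsmallerthaninf}, replacing condition $(CW^{1,1})$ by $(CW^{1,\omega})$ and the completion-of-squares estimate by the generalized Young inequality of Proposition~\ref{youngfenchelinequality}. First I would apply $(CW^{1,\omega})$ to the pair $(y,z)$, that is, with $y$ playing the role of the first variable and $z$ that of the second, obtaining
$$
f(y) \geq f(z)+ \langle G(z), y-z \rangle + M \varphi^*\left(\tfrac{1}{M} \| G(y)-G(z)\| \right).
$$

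Next I would add $\langle G(y), x-y \rangle + M\varphi(\|x-y\|)$ to both sides and rewrite the two inner products via the identity
$$
\langle G(z), y-z \rangle + \langle G(y), x-y \rangle = \langle G(z), x-z \rangle + \langle G(y)-G(z), x-y \rangle,
$$
so that the right-hand side becomes $f(z)+\langle G(z),x-z\rangle$ plus a remainder
$$
R := \langle G(y)-G(z), x-y \rangle + M\varphi^*\left(\tfrac1M\|G(y)-G(z)\|\right) + M\varphi(\|x-y\|).
$$
It then suffices to prove $R\geq 0$.

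To handle $R$, I would bound the inner product below by Cauchy--Schwarz, $\langle G(y)-G(z), x-y\rangle \geq -\|G(y)-G(z)\|\,\|x-y\|$, and then set $a=\|x-y\|$ and $b=\tfrac1M\|G(y)-G(z)\|$. This reduces $R\geq 0$ to $M\varphi^*(b)+M\varphi(a)\geq M\,ab$, i.e. to $\varphi(a)+\varphi^*(b)\geq ab$, which is precisely Young's inequality for the convex function $\varphi$ (Proposition~\ref{youngfenchelinequality}). This closes the argument.

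I do not anticipate any serious obstacle: the whole proof is a short chain consisting of one application of the hypothesis, one algebraic identity, Cauchy--Schwarz, and one invocation of Young's inequality. The only point requiring care is the bookkeeping of which variable plays which role in $(CW^{1,\omega})$, together with checking that the constants $M$ match so that the leftover terms assemble exactly into the Young pairing $\varphi(a)+\varphi^*(b)\geq ab$. This step is the direct $C^{1,\omega}$ analogue of, and generalizes, the perfect square $\tfrac{1}{2M}\| G(y)-G(z) + 2M(y-x)\|^2\geq 0$ that concluded the $C^{1,1}$ Lemma~\ref{minimalsmallerthaninf}.
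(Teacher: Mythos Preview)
Your proposal is correct and follows essentially the same approach as the paper's own proof: apply $(CW^{1,\omega})$ to the pair $(y,z)$, regroup the inner products via the same algebraic identity, and then close with the Fenchel--Young inequality $\varphi(a)+\varphi^*(b)\geq ab$. The only cosmetic difference is that the paper rewrites $M\varphi^*(\tfrac1M\|G(y)-G(z)\|)$ as $(M\varphi)^*(\|G(y)-G(z)\|)$ via Remark~\ref{remarkrewritingcw1omega}(ii) before invoking Young, whereas you keep the original form and scale by $M$ at the end; the two are equivalent.
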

\begin{proof}
Given $y,z\in E, \: x\in X,$ condition $(CW^{1,\omega})$ with constant $M$ (together with Remark \ref{remarkrewritingcw1omega} $(ii)$) leads us to
\begin{align*}
 & f(y)  +\langle G(y),  x-y \rangle  + M \varphi( \| x-y\| )   \\
& \geq f(z)+ \langle G(z),x-z \rangle + (M \varphi)^*(\| G(y)-G(z)\|) \\
 & \quad + \langle G(z)-G(y), y-x \rangle + M \varphi( \|x-y\|) \\
 & \geq f(z)+ \langle G(z),x-z \rangle -ab+ M \varphi(a) +(M\varphi)^*(b),
\end{align*}
where $a=\|y-x\|$ and $b= \| G(z)-G(y)||.$ Applying Proposition \ref{youngfenchelinequality} we obtain that the last term is greater than or equal to $f(z)+\langle G(z), x-z \rangle.$ 
\end{proof}

The previous Lemma shows that $m \leq g$, where $g$ is defined as in Theorem \ref{theoremformulac1omegaconvex}, and 
$$
m(x) := \sup_{z \in E} \lbrace f(z)+ \langle G(z), x-z \rangle \rbrace, \quad x\in X.
$$
By definition of $g$ and $m$ it is then obvious that $f \leq m \leq g \leq f$ on $E.$ Thus $g = f$ on $E.$ 

\begin{lemma}
We have
$$
g(x+h)+g(x-h)-2g(x) \leq M \varphi(\| 2 h \|) \quad \text{for all} \quad x,h \in X.
$$
\end{lemma}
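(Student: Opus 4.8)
The plan is to imitate the proof of Lemma~\ref{estimationinfimumc11}, replacing the parallelogram identity used there by the second-difference estimate for $\varphi(\|\cdot\|)$ already established in Lemma~\ref{constantnormomega}. Fix $x,h\in X$ and $\varepsilon>0$. By the definition of $g$ as an infimum, I would first choose $y\in E$ such that
$$
g(x)\geq f(y)+\langle G(y),x-y\rangle+M\varphi(\|x-y\|)-\varepsilon.
$$
Using this \emph{same} point $y$ as a (non-optimal) competitor in the infima defining $g(x+h)$ and $g(x-h)$, I obtain the two upper bounds $g(x\pm h)\leq f(y)+\langle G(y),x\pm h-y\rangle+M\varphi(\|x\pm h-y\|)$.

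Adding these and subtracting $2g(x)$, the three copies of $f(y)$ cancel, and, crucially, the linear terms $\langle G(y),\pm h\rangle$ cancel against each other, since $(x+h-y)+(x-h-y)-2(x-y)=0$. This leaves
$$
g(x+h)+g(x-h)-2g(x)\leq M\bigl(\varphi(\|x+h-y\|)+\varphi(\|x-h-y\|)-2\varphi(\|x-y\|)\bigr)+2\varepsilon.
$$
Writing $u=x-y$ and $\psi(\cdot)=\varphi(\|\cdot\|)$, the bracketed expression is exactly $\psi(u+h)+\psi(u-h)-2\psi(u)$.

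The only nontrivial point is to bound this second difference of $\psi$, and this is precisely where Lemma~\ref{constantnormomega} does the work: it gives $\psi(u+h)+\psi(u-h)-2\psi(u)\leq\psi(2h)=\varphi(2\|h\|)=\varphi(\|2h\|)$ for all $u,h\in X$. Substituting this into the displayed inequality yields
$$
g(x+h)+g(x-h)-2g(x)\leq M\varphi(\|2h\|)+2\varepsilon,
$$
and since $\varepsilon>0$ is arbitrary the claimed estimate follows.

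I expect no real obstacle beyond bookkeeping: the genuine analytic difficulty — the fact that $\varphi(\|\cdot\|)$ satisfies this sharp second-difference bound — has already been isolated and settled in Lemma~\ref{constantnormomega} via Fenchel duality and Vladimirov's theorem, so here it only remains to transport it through the infimum, exactly as in the $C^{1,1}$ case. The one subtlety worth recording explicitly is the cancellation of the linear terms, which is what guarantees that no dependence on $G(y)$ survives and allows the pointwise estimate for $\psi$ to be applied at the single center $u=x-y$.
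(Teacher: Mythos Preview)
Your proof is correct and follows essentially the same approach as the paper: choose a near-minimizer $y$ for $g(x)$, use that same $y$ as a competitor in the infima defining $g(x\pm h)$ so that the constant and linear terms cancel, and then invoke Lemma~\ref{constantnormomega} to bound the remaining second difference of $\psi(\cdot)=\varphi(\|\cdot\|)$ by $\varphi(\|2h\|)$. The paper's proof is line-for-line the same argument.
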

\begin{proof}
Given $x,h \in X$ and $\varepsilon>0,$ by definition of $g,$ we can pick $y\in E$ with
$$
g(x) \geq f(y)+ \langle G(y), x-y \rangle + M \varphi( \| x-y\|) - \varepsilon.
$$
We then have
\begin{align*}
g(x+h)& +g(x-h)-2g(x)  \leq f(y)+\langle G(y), x+h-y \rangle + M \varphi( \| x+h-y\|)\\
& \quad + f(y)+\langle G(y), x-h-y \rangle  + M \varphi( \| x-h-y\|) \\
& \quad -2 \left( f(y)+ \langle G(y), x-y \rangle +  M \varphi( \| x-y\|) \right) + 2 \varepsilon \\
& = M \left( \varphi(\|x+h-y\|) + \varphi(\|x-h-y\|)- 2 \varphi(\| x-y\|)\right) + 2 \varepsilon \\
& \leq M \varphi(2\| h \|) + 2 \varepsilon,
\end{align*}
where the last inequality follows from Lemma \ref{constantnormomega}.
\end{proof}

Now, if we define $F = \textrm{conv}(g)$, with the same proof as that of Theorem \ref{convexenvelopeconstant}, we get that
$$
F(x+h)+F(x-h)-2 F(x) \leq M \varphi(\|2 h \|)  \quad \text{for all} \quad x,h \in X.
$$
Because $F$ is convex, by virtue of Proposition \ref{equivalenceomegadifferentiability}, we have that $F \in C^{1,\omega}(X)$ with 
$$
\| \nabla F(x)-\nabla F(y) \|\leq 4M \omega( 2 \|x-y\| ) \quad \text{for all} \quad x,y \in X.
$$
Finally, the same argument involving the function $m$ as that at the end of Section \ref{sectionc11convex} shows that $F=f$ and $\nabla F = G$ on $E.$

\section{$C^{1,\alpha}$ extensions of convex jets in superreflexive Banach spaces}

Throughout this section, and unless otherwise stated, $X$ will denote a superreflexive Banach space, $\| \cdot \|$ an equivalent norm on $X$ and $\| \cdot \|_*$ the dual norm of $\| \cdot \|$ on $X^*$. By Pisier's results (see \cite[Theorem 3.1]{Pisier}), we may assume that the norm $\| \cdot \|$ is uniformly smooth with modulus of smoothness of power type $p=1+\alpha$ for some $0<\alpha \leq 1$. Hence, there exists a constant $C \geq 2$, depending only on this norm, such that
\begin{equation}\label{modulussmoothnesssuperreflexive}
 \| x+h \|^{1+\alpha} +  \| x-h\|^{1+\alpha}-2 \|x\|^{1+\alpha} \leq C \| h\|^{1+\alpha} \quad \text{for all} \quad x,h\in X.
\end{equation}
For a mapping $G : E \to X^*,$ where $E$ is a subset of $X,$ we will denote 
$$
M_\alpha(G)= \sup_{x \neq y ,\: x,y\in E} \frac{\| G(x)-G(y)\|_*}{\| x-y \|^\alpha}.
$$

By a $1$-jet defined on $E$ we mean a pair of functions $(f,G),$ where $f: E \to \R$ and $G: E \to X^*$. 

\begin{definition}
Given an arbitrary subset $E$ of $X,$ and a $1$-jet $f: E \to \R, \: G : E \to X^*,$ we will say that $(f,G)$ satisfies the condition $(CW^{1,\alpha})$ on $E$ with constant $M>0,$ provided that
$$
f(x) \geq f(y)+ G(y)( x-y)+ \frac{\alpha}{(1+\alpha)M^{1/\alpha}} \| G(x)-G(y)\|_*^{1+ \frac{1}{\alpha}} \quad \text{for all} \quad x,y\in E.
$$
\end{definition}

\begin{remark}
If $(f,G)$ satisfies $(CW^{1,\alpha})$ on $E$ with constant $M>0,$ then $M_\alpha(G) \leq \left( \frac{1+\alpha}{2 \alpha} \right)^\alpha M.$
\end{remark}
\begin{proof}
Using inequality $(CW^{1,\alpha})$ we obtain for all $x,y\in E$
\begin{align*}
f(x) \geq f(y)+ G(y)( x-y) + \frac{\alpha}{(1+\alpha)M^{1/\alpha}} \| G(x)-G(y)\|_*^{1+ \frac{1}{\alpha}} , \\
f(y) \geq f(x)+ G(x)( y-x) + \frac{\alpha}{(1+\alpha)M^{1/\alpha}} \| G(x)-G(y)\|_*^{1+ \frac{1}{\alpha}} .
\end{align*}
By summing up both inequalities we easily get
$$
\|G(x)-G(y)\|_*\, \| x-y \| \geq  \left( G(x)-G(y)\right) (x-y) \geq \frac{2\alpha}{(1+\alpha)M^{1/\alpha}} \| G(x)-G(y)\|_*^{1+ \frac{1}{\alpha}}
$$
which immediately implies the desired estimate.
\end{proof}

\begin{proposition}\label{necessityconditioncw1alpha}
Let $X$ be a Banach space, let $f\in C^{1,\alpha}(X)$ be convex with $M_\alpha(Df)\leq M,$ and assume that $f$ is not affine. Then $(f,Df)$ satisfies the condition $(CW^{1,\alpha})$ on $X$ with constant $M.$ 
\end{proposition}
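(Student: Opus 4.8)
The plan is to follow the three-case structure of the proof of Proposition \ref{necessityconditioncw1omega}, specialized to the power modulus $\omega(t)=t^\alpha$. With this choice $\varphi(t)=\int_0^t s^\alpha\,ds=\tfrac{1}{1+\alpha}t^{1+\alpha}$ and, by Proposition \ref{zalinescupropertiesomega}, $\varphi^*(s)=\int_0^s u^{1/\alpha}\,du=\tfrac{\alpha}{1+\alpha}s^{1+1/\alpha}$, so that the quantity appearing in $(CW^{1,\alpha})$ is exactly $M\varphi^*\!\left(\tfrac1M\|G(x)-G(y)\|_*\right)$. Thus the statement to prove is $f(x)-f(y)-Df(y)(x-y)\geq M\varphi^*\!\left(\tfrac1M\|Df(x)-Df(y)\|_*\right)$ for all $x,y\in X$.

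First I would perform the same two reductions as before. Since $f$ is not affine, $Df$ is nonconstant, so $M_\alpha(Df)>0$ and hence $M>0$; replacing $f$ by $g=\tfrac1M f$ rescales $M_\alpha(Df)\le M$ to $M_\alpha(Dg)\le 1$, reducing everything to the case $M=1$. Next, replacing $f$ by $z\mapsto f(z)-f(y)-Df(y)(z-y)$ (still convex, of class $C^{1,\alpha}$ with the same Hölder constant of its derivative, since subtracting an affine function does not change $Df(a)-Df(b)$, and now vanishing together with its derivative at $y$) reduces the problem to proving, for convex $f\in C^{1,\alpha}(X)$ with $M_\alpha(Df)\le 1$ and $f(y)=0$, $Df(y)=0$, the single inequality $f(x)\geq \varphi^*(\|Df(x)\|_*)$. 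Because $Df(y)=0$ and $f$ is convex, $y$ is a global minimum, so $f\geq 0$ on $X$; this nonnegativity is what will be contradicted.

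For the core inequality I would argue by contradiction, assuming $f(x)<\varphi^*(\|Df(x)\|_*)$ for some $x$, and slide along an almost-steepest-descent ray. Given $\varepsilon>0$, pick a unit vector $w_\varepsilon\in X$ with $Df(x)(w_\varepsilon)>\|Df(x)\|_*-\varepsilon$, set $v=-w_\varepsilon$ and $h(t)=f(x+tv)$. Then $h'(0)=Df(x)(v)<-\|Df(x)\|_*+\varepsilon$, while the Hölder bound $M_\alpha(Df)\le 1$ gives $|h'(t)-h'(0)|=|(Df(x+tv)-Df(x))(v)|\leq \|Df(x+tv)-Df(x)\|_*\leq t^\alpha$ for $t\geq 0$. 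Integrating yields $h(t)\leq f(x)+(\varepsilon-\|Df(x)\|_*)\,t+\varphi(t)$. Evaluating at the critical radius $t^*=\|Df(x)\|_*^{1/\alpha}=\omega^{-1}(\|Df(x)\|_*)$ and using the Fenchel equality $\varphi(t^*)=\|Df(x)\|_*\,t^*-\varphi^*(\|Df(x)\|_*)$ from Proposition \ref{zalinescupropertiesomega}, I obtain $f(x+t^*v)\leq f(x)-\varphi^*(\|Df(x)\|_*)+\varepsilon\,t^*$. Since $t^*$ does not depend on $\varepsilon$, the contradiction hypothesis $f(x)-\varphi^*(\|Df(x)\|_*)<0$ lets me choose $\varepsilon$ small enough that $f(x+t^*v)<0$, contradicting $f\geq 0$. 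Hence $f(x)\geq\varphi^*(\|Df(x)\|_*)$, which is the desired inequality; unwinding the two reductions gives $(CW^{1,\alpha})$ with constant $M$.

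The one genuinely new point compared with the Hilbert proof of Proposition \ref{necessityconditioncw1omega} is the choice of descent direction. There $\nabla f(x)$ is a vector and $-\nabla f(x)/\|\nabla f(x)\|$ realizes the steepest descent exactly; here $Df(x)\in X^*$ and, since $X$ is only assumed to be a general Banach space, the dual norm need not be attained on the unit sphere. This is precisely why I work with the approximate maximizers $w_\varepsilon$ and pass to the limit $\varepsilon\to0$ at the end. No reflexivity or smoothness of the norm is used for this necessity statement, those hypotheses of Section 5 being needed only for the sufficiency and the construction; the remainder is the routine specialization of the exponents $1+\alpha$ and $1+\tfrac1\alpha$ and presents no difficulty.
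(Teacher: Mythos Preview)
Your proof is correct and follows essentially the same approach as the paper's own proof: the same three-case reduction (general $M$ to $M=1$, general $y$ to $f(y)=0$, $Df(y)=0$), the same contradiction argument sliding along an almost-steepest-descent ray with an $\varepsilon$-approximate norming vector, and the same evaluation at $t^*=\|Df(x)\|_*^{1/\alpha}$. The paper fixes the admissible $\varepsilon$ explicitly in terms of the gap $r=\varphi^*(\|Df(x)\|_*)-f(x)$, whereas you just invoke ``$\varepsilon$ small enough''; both are equivalent since, as you correctly observe, $t^*$ is independent of $\varepsilon$.
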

\noindent On the other hand, if $f$ is affine and continuous, it is obvious that $(f, D f)$ satisfies $(CW^{1,\alpha})$ on every $E\subset X$, for every $M>0$. 
\begin{proof}
Suppose that there exist different points $x, y\in X$ such that
$$
f(x)-f(y)- D f(y) ( x-y) <\frac{\alpha}{(1+\alpha) M^{\frac{1}{\alpha}}}\| D f(x)-D f(y)\|_*^{1+ \frac{1}{\alpha}},
$$
and we will get a contradiction.

\noindent {\bf Case 1.} Assume further that $M=1$, $f(y)=0$, and $D f(y)=0$.
By convexity this implies $f(x)\geq 0$.
Then we have 
\begin{equation}\label{assumptionfxcontradiction}
0\leq f(x)\leq \frac{\alpha}{1+\alpha}\|D f(x)\|_*^{1+ \frac{1}{\alpha}}-r \quad \text{for some} \quad r>0.
\end{equation}
Let us fix $0<\varepsilon \leq \frac{r}{2}\big \| Df(x)\big \|_*^{-\left( 1+\frac{1}{\alpha} \right)}$ and pick $v_\varepsilon \in X$ with $\|v_\varepsilon\|=1$ and 
\begin{equation}\label{approximationnormvvarepsilon}
Df(x)(v_\varepsilon) \leq (\varepsilon-1) \| Df(x)\|_*.
\end{equation}
We define $\varphi(t)=f(x+tv_\varepsilon)
$
for every $t\in\R.$ We have $\varphi(0)=f(x)$, $\varphi'(0)=Df(x)(v_\varepsilon)$, and $\varphi'(t)=D f(x+tv_\varepsilon)(v_\varepsilon).$ This implies that 
$$
|\varphi(t)-\varphi(0)-\varphi'(0)t|\leq \int_0^t s^\alpha ds = \frac{t^{1+\alpha}}{1+\alpha}
$$
for every $t\in\R^{+}$, hence also that 
$$
\varphi(t)\leq f(x) +t  Df(x)(v_\varepsilon)+\frac{t^{1+\alpha}}{1+\alpha} \textrm{ for all } t\in \R^{+}.
$$
Using first \eqref{assumptionfxcontradiction} and then \eqref{approximationnormvvarepsilon} we have
\begin{align*}
& f\left( x+ \| Df(x)\|_*^{1/\alpha} v_\varepsilon\right) =\varphi\left( \| Df(x)\|_*^{1/\alpha} \right) \\
&  \leq \frac{\alpha}{1+\alpha}\| D f(x)\|_*^{1+\frac{1}{\alpha}}-r +  \| Df(x)\|_*^{1/\alpha} Df(x)(v_\varepsilon)+\frac{1}{1+ \alpha}\|Df(x)\|_*^{1+\frac{1}{\alpha}} \\
& \leq -r + \varepsilon \|Df(x)\|_*^{1+\frac{1}{\alpha}} \leq -\frac{r}{2}<0,
\end{align*}
which is in contradiction with the assumptions that $f$ is convex, $f(y)=0$, and $D f(y)=0.$ This shows that
$$
f(x)\geq \frac{\alpha}{1+\alpha}\|D f(x)\|_*^{1+ \frac{1}{\alpha}}.
$$

\noindent {\bf Case 2.} Assume only that $M=1$. Define
$
g(z)=f(z)-f(y)-D f(y)(z-y)
$
for every $z\in X$. Then $g(y)=0$ and $D g(y)=0$. By Case 1, we get
$$
g(x)\geq \frac{\alpha}{1+\alpha}\|D g(x)\|_*^{1+ \frac{1}{\alpha}},
$$
and since $D g(x)=D f(x)-D f(y)$ the Proposition is thus proved in the case when $M=1$.

\noindent {\bf Case 3.} In the general case, we may assume $M>0$ (the result is trivial for $M=0$). Consider $\psi=\frac{1}{M}f$, which satisfies the assumption of Case 2. Therefore
$$
\psi(x)-\psi(y)-D \psi(y) ( x-y) \geq \frac{\alpha}{1+\alpha} \|D \psi(x)-D \psi(y)\|_*^{1+ \frac{1}{\alpha}},
$$
which is equivalent to the desired inequality.
\end{proof}

\begin{proposition}\label{equivalencealphadifferentiability}
If $f$ is a continuous convex function on $X$ and 
$$
f(x+h)+f(x-h)-2 f(x) \leq C \| h \|^{1+\alpha}, \quad \text{for all} \quad x,h \in X,
$$
then $f$ is of class $C^{1,\alpha}(X)$ and $M_\alpha(D f) \leq  2^{1+\alpha} C . $ 
\end{proposition}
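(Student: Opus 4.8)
The plan is to mimic almost verbatim the proof of Proposition \ref{equivalenceomegadifferentiability}, replacing the general modulus $C\varphi(2\|h\|)$ by the explicit power $C\|h\|^{1+\alpha}$ and tracking the constant through a single triangle inequality. First I would establish the existence of $Df$. Since
$$
\lim_{h\to 0}\frac{f(x+h)+f(x-h)-2f(x)}{\|h\|} \leq \lim_{h\to 0} C\|h\|^\alpha = 0,
$$
and $f$ is continuous and convex, the symmetric second difference controls the one-sided directional derivatives and forces Gateaux differentiability at every point; convexity then upgrades this to Fr\'echet differentiability, exactly as in the proof of Proposition \ref{characterizationdifferentiability}.

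The core of the argument is the H\"older estimate on $Df$. I would fix distinct points $x,y\in X$ and an arbitrary $h$ with $\|h\|=\|x-y\|$, and reproduce the chain of gradient inequalities from Proposition \ref{equivalenceomegadifferentiability}. Writing $(Df(x)-Df(y))(h)=Df(x)(h)-Df(y)(h)$ and applying the convexity inequality $f(u)\geq f(v)+Df(v)(u-v)$ successively --- once to replace $Df(x)(h)$ by $f(x+h)-f(x)$, once to insert the nonnegative quantity $f(x)-f(y)-Df(y)(x-y)$, and once to bound $-Df(y)(x+h-y)$ by $f(2y-x-h)-f(y)$ --- one arrives at
$$
(Df(x)-Df(y))(h) \leq f\big(y+(x+h-y)\big) + f\big(y-(x+h-y)\big) - 2f(y).
$$

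The right-hand side is precisely a symmetric second difference of $f$ at $y$ with increment $x+h-y$, so the hypothesis bounds it by $C\|x+h-y\|^{1+\alpha}$. Using $\|x+h-y\|\leq\|x-y\|+\|h\|=2\|x-y\|$ gives $(Df(x)-Df(y))(h)\leq 2^{1+\alpha}C\|x-y\|^{1+\alpha}$. Finally, taking the supremum over all $h$ with $\|h\|=\|x-y\|$ turns the left side into $\|x-y\|\,\|Df(x)-Df(y)\|_*$, and dividing by $\|x-y\|$ produces
$$
\|Df(x)-Df(y)\|_* \leq 2^{1+\alpha}C\|x-y\|^\alpha,
$$
that is, $M_\alpha(Df)\leq 2^{1+\alpha}C$. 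There is no genuine obstacle here beyond careful bookkeeping; the only point that needs attention is the duality pairing when passing to the supremum over $h$, so that the left side scales linearly in $\|x-y\|$ and the factor $2^{1+\alpha}$ emerges exactly from the single triangle inequality $\|x+h-y\|\leq 2\|x-y\|$.
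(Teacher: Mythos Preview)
Your proof is correct and is precisely the argument the paper intends: the paper's own proof reads ``Similar to the proof of Proposition \ref{equivalenceomegadifferentiability},'' and your write-up carries out exactly that specialization, with the factor $2^{1+\alpha}$ arising from the single triangle inequality $\|x+h-y\|\leq 2\|x-y\|$ applied to the symmetric second difference at $y$.
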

\begin{proof}
Similar to the proof of Proposition \ref{equivalenceomegadifferentiability}.
\end{proof}

Our main result in this section is the following.

\begin{theorem}\label{theoremformulac1alphaconvex}
Given a $1$-jet $(f,G)$ defined on $E$ satisfying the property $(CW^{1,\alpha})$ with constant $M$ on $E,$ the formula
$$
F=\textrm{conv}(g), \quad g(x) = \inf_{y \in E} \lbrace f(y)+G(y)(x-y) + \tfrac{M}{1+\alpha} \|x-y\|^{1+\alpha} \rbrace, \quad x\in X,
$$
defines a $C^{1,\alpha}$ convex function with $F_{|_E}=f$, $(D F)_{|_E}=G$, and $$M_\alpha(D F) \leq \frac{2^{1+\alpha} C }{1+\alpha}M,$$ where $C$ is the constant of \eqref{modulussmoothnesssuperreflexive}.
\end{theorem}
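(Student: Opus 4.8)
The plan is to follow the proof of Theorem \ref{theoremformulac1omegaconvex} almost verbatim, specializing the modulus to $\omega(t)=t^\alpha$, for which $\varphi(t)=\int_0^t s^\alpha\,ds=\tfrac{t^{1+\alpha}}{1+\alpha}$ and $\varphi^*(t)=\tfrac{\alpha}{1+\alpha}t^{1+1/\alpha}$; with this choice $(CW^{1,\alpha})$ is exactly $(CW^{1,\omega})$, and the auxiliary term $M\varphi(\|x-y\|)$ in $g$ becomes $\tfrac{M}{1+\alpha}\|x-y\|^{1+\alpha}$. The one point where the Hilbertian structure was used in Section 4 --- namely the parallelogram estimate of Lemma \ref{constantnormomega} for the norm power --- is now replaced by the modulus of smoothness inequality \eqref{modulussmoothnesssuperreflexive}, which is precisely the substitute tailored to a uniformly smooth norm of power type $1+\alpha$. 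Everything else is formal.

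First I would establish the analogue of Lemma \ref{minimalsmallerthaninfalpha}, i.e. that for all $y,z\in E$ and $x\in X$,
$$f(z)+G(z)(x-z)\le f(y)+G(y)(x-y)+\tfrac{M}{1+\alpha}\|x-y\|^{1+\alpha}.$$
Using $(CW^{1,\alpha})$ to bound $f(y)$ below by $f(z)+G(z)(y-z)+\tfrac{\alpha}{(1+\alpha)M^{1/\alpha}}\|G(y)-G(z)\|_*^{1+1/\alpha}$, regrouping the linear terms as $G(z)(x-z)+(G(y)-G(z))(x-y)$, and estimating $(G(y)-G(z))(x-y)\ge-\|G(y)-G(z)\|_*\,\|x-y\|$, the claim reduces to the scalar inequality $ab\le\tfrac{M}{1+\alpha}a^{1+\alpha}+\tfrac{\alpha}{(1+\alpha)M^{1/\alpha}}b^{1+1/\alpha}$ with $a=\|x-y\|$ and $b=\|G(y)-G(z)\|_*$. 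This is exactly Young's inequality (Proposition \ref{youngfenchelinequality}) applied to the convex function $\rho(t)=\tfrac{M}{1+\alpha}t^{1+\alpha}$, whose Fenchel conjugate is the explicit power $\rho^*(s)=\tfrac{\alpha}{(1+\alpha)M^{1/\alpha}}s^{1+1/\alpha}$. Introducing $m(x):=\sup_{z\in E}\{f(z)+G(z)(x-z)\}$, this lemma gives $m\le g$; since trivially $f\le m$ and $g\le f$ on $E$, one gets $f\le m\le g\le f$ on $E$, hence $g=f$ on $E$, and in particular $g$ admits the convex lower semicontinuous minorant $m$, so $\textrm{conv}(g)$ is well defined.

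Next I would prove the one-sided second-difference estimate for $g$. Choosing, for given $x,h\in X$ and $\varepsilon>0$, a point $y\in E$ almost attaining the infimum defining $g(x)$, the terms in $f$ and $G(y)(\cdot)$ cancel and one is left with $g(x+h)+g(x-h)-2g(x)\le\tfrac{M}{1+\alpha}\big(\|(x-y)+h\|^{1+\alpha}+\|(x-y)-h\|^{1+\alpha}-2\|x-y\|^{1+\alpha}\big)+2\varepsilon$; applying \eqref{modulussmoothnesssuperreflexive} with the vector $x-y$ in place of $x$ and letting $\varepsilon\to0$ yields $g(x+h)+g(x-h)-2g(x)\le\tfrac{MC}{1+\alpha}\|h\|^{1+\alpha}$. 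Running the convex-combination argument of Theorem \ref{convexenvelopeconstant} verbatim (the second difference passes through convex combinations because $x\pm h=\sum_i\lambda_i(x_i\pm h)$), the same bound holds for $F=\textrm{conv}(g)$; and since $m\le F\le g$ with $g$ finite, $F$ is finite, convex, hence continuous. Proposition \ref{equivalencealphadifferentiability} then gives $F\in C^{1,\alpha}(X)$ with $M_\alpha(DF)\le 2^{1+\alpha}\tfrac{MC}{1+\alpha}=\tfrac{2^{1+\alpha}C}{1+\alpha}M$, the asserted constant. Finally, the trace identities follow as at the end of Section \ref{sectionc11convex}: from $m\le F\le g$ and $m=g=f$ on $E$ we get $F=f$ on $E$; and since $m$ is convex with $m\le F$, $m=F$ on $E$, and $F$ is differentiable, $m$ is differentiable on $E$ with $Dm=DF$ there, while $G(x)\in\partial m(x)$ by construction, whence $DF=G$ on $E$.

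The proof is essentially a transcription, so there is no genuinely hard step; the only place demanding care is the verification that the weaker, inequality-type norm estimate \eqref{modulussmoothnesssuperreflexive} (in contrast to the exact parallelogram identity available in Hilbert space) still suffices to drive the second-difference bound for $g$ and $F$. That it does is the whole point of invoking Pisier's renorming to obtain a modulus of smoothness of power type $1+\alpha$: the constant $C$ simply propagates, up to the factor $\tfrac{2^{1+\alpha}}{1+\alpha}$, into the final Hölder estimate on $DF$. I would also double-check that the explicit Fenchel pair $(\rho,\rho^*)$ above is correct, since the whole mechanism of the first step rests on matching the exponent $1+\tfrac{1}{\alpha}$ in $(CW^{1,\alpha})$ with the conjugate of the power $1+\alpha$.
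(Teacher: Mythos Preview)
Your proposal is correct and follows essentially the same approach as the paper's own proof: the same sandwich $m\le g$ via Young's inequality, the same second-difference bound for $g$ using \eqref{modulussmoothnesssuperreflexive} in place of the Hilbertian estimate, the same passage to $F=\textrm{conv}(g)$ via Theorem~\ref{convexenvelopeconstant}, and the same concluding argument for the trace identities. The only cosmetic difference is that you invoke the Fenchel--Young inequality (Proposition~\ref{youngfenchelinequality}) for the explicit pair $(\rho,\rho^*)$, whereas the paper phrases it as classical Young's inequality with exponents $p=1+\alpha$, $q=1+\tfrac{1}{\alpha}$ and $\varepsilon=\tfrac{M}{1+\alpha}$; these are of course the same statement.
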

\begin{proof}
The general scheme of the proof is similar to that of Theorem \ref{theoremformulaC11convex}. We will need to use the following auxiliary results.
\begin{proposition}[Young's inequality]\label{youngsinequality}
Let $1<p,q < \infty$ with $\tfrac{1}{p}+ \tfrac{1}{q}=1.$ Then
$$
ab \leq \varepsilon a^p + \frac{b^q}{q( \varepsilon p)^{q/p}} \quad a,b,\varepsilon >0.
$$
\end{proposition}

\begin{lemma}\label{minimalsmallerthaninfalpha}
We have 
$$
f(z)+ G(z)( x-z ) \leq f(y)+ G(y)(x-y) + \tfrac{M}{1+\alpha} \|x-y\|^{1+\alpha} 
$$
for every $y,z\in E, \: x\in X.$ 
\end{lemma}
\begin{proof}
Given $y,z\in E, \: x\in X,$ condition $(CW^{1,\alpha})$ with constant $M$ implies
\begin{align*}
  f(y)&  + G(y)(  x-y)  + \tfrac{M}{1+\alpha} \|x-y\|^{1+\alpha}   \\
 & \geq f(z)+  G(z)( y-z)  + \tfrac{\alpha}{(1+\alpha)M^{1/\alpha}}\| G(y)-G(z)\|_*^{1+ \frac{1}{\alpha}} \\
 & \quad +  G(y)( x-y) + \tfrac{M}{1+\alpha} \|x-y\|^{1+\alpha} \\
 & = f(z)+  G(z)(x-z)  + \tfrac{\alpha}{(1+\alpha)M^{1/\alpha}}\| G(y)-G(z)\|_*^{1+ \frac{1}{\alpha}} \\
 & \quad +  (G(z)-G(y))( y-x)  + \tfrac{M}{1+\alpha} \|x-y\|^{1+\alpha} \\
 & \geq f(z)+  G(z)(x-z)  -ab+ \tfrac{M}{1+\alpha}a^{1+\alpha} + \tfrac{\alpha}{(1+\alpha)M^{1/\alpha}}b^{1+\frac{1}{\alpha}},
\end{align*}
where $a=\|y-x\|$ and $b= \| G(z)-G(y)\|_*$. By applying Proposition \ref{youngsinequality} with
$$
p=1+\alpha, \quad q= 1+\tfrac{1}{\alpha} \quad \varepsilon = \tfrac{M}{1+\alpha},
$$
we obtain that 
$$
 -ab+ \tfrac{M}{1+\alpha}a^{1+\alpha} + \tfrac{\alpha}{(1+\alpha)M^{1/\alpha}}b^{1+\frac{1}{\alpha}} \geq 0.
 $$
This proves the Lemma.
\end{proof}

The preceding lemma shows that $m \leq g$, where $g$ is defined as in Theorem \ref{theoremformulac1alphaconvex}, and 
$$
m(x) := \sup_{z \in E} \lbrace f(z)+ G(z) (x-z) \rbrace, \quad x\in X.
$$
Then, using the definition of $g$ and $m$, we also have that $f \leq m \leq g \leq f$ on $E$. Thus $g = f$ on $E.$

\begin{lemma}
We have
$$
g(x+h)+g(x-h)-2g(x) \leq \frac{C M}{1+\alpha} \| h \|^{1+\alpha} \quad \text{for all} \quad x,h \in X,
$$
where $C$ is as in \eqref{modulussmoothnesssuperreflexive}.
\end{lemma}
\begin{proof}
Given $x,h \in X$ and $\varepsilon>0,$ by definition of $g,$ we can pick $y\in E$ with
$$
g(x) \geq f(y)+ G(y) (x-y) + \tfrac{M}{1+\alpha} \| x-y\|^{1+\alpha} - \varepsilon.
$$
We then have
\begin{align*}
g(x+h)& +g(x-h)-2g(x)  \leq f(y)+ G(y) (x+h-y)  + \tfrac{M}{1+\alpha} \| x+h-y\|^{1+\alpha} \\
& \quad + f(y)+G(y) (x-h-y)  + \tfrac{M}{1+\alpha} \| x-h-y\|^{1+\alpha} \\
& \quad -2 \left( f(y)+  G(y) ( x-y) +  \tfrac{M}{1+\alpha} \| x-y\|^{1+\alpha} \right) + 2 \varepsilon \\
& = \tfrac{M}{1+\alpha} \left( \|x+h-y\|^{1+\alpha} + \|x-h-y\|^{1+\alpha}- 2 \| x-y\|^{1+\alpha} \right) + 2 \varepsilon \\
& \leq \frac{C M}{1+\alpha} \| h \|^{1+\alpha} + 2 \varepsilon,
\end{align*}
where the last inequality follows from inequality \eqref{modulussmoothnesssuperreflexive}.
\end{proof}

Then, by defining $F = \textrm{conv}(g)$, and with the same proof as that of Theorem \ref{convexenvelopeconstant}, we deduce that
$$
F(x+h)+F(x-h)-2 F(x) \leq \frac{C M}{1+\alpha} \| h \|^{1+\alpha}  \quad \text{for all} \quad x,h \in X.
$$
Because $F$ is convex and continuous, by virtue of Proposition \ref{equivalencealphadifferentiability}, we have that $F \in C^{1,\alpha}(X)$ with 
$$
M_\alpha( D F) \leq \frac{2^{1+\alpha} C }{1+\alpha} M.
$$
Finally, the same argument involving the function $m$ as that at the end of Section \ref{sectionc11convex} shows that $F=f$ and $D F = G$ on $E$.
\end{proof}

\section{Final comments}

Let us finish this paper with some comments and an example which show that we cannot expect the above results to hold true for a general Banach space $X$, unless $X$ is superreflexive. 

On the one hand, observe that a necessary condition for the validity of a Whitney extension theorem of class $C^{1, \omega}(X)$ for a Banach space $X$ is that there is a smooth bump function whose derivative is $\omega$-continuous on $X$. Indeed, let $C=\{x\in X : \|x\|\geq 1\} \cup\{0\}$, and define $f:C\to\R$ and $G:C\to X^{*}$ by
$$
f(x)=0 \textrm{ if } \|x\|\geq 1, \, f(0)=1, \textrm{ and } G(x)=0 \textrm{ for all } x\in C.
$$
It is trivial to check that the jet $(f,G)$ satisfies the assumptions $(\widetilde{W^{1,1}})$ of the Whitney extension theorem. If a Whitney-type extension theorem were true for $X$, then there would exist a $C^{1, \omega}$ function $F:X\to\R$ such that $F(x)=0$ for $\|x\|\geq 1$ and $F(0)=1$. Then according to \cite[Theorem V.3.2]{DGZ} the space $X$ would be superreflexive. 

It is unkown whether for every superreflexive Banach space $X$ (other than a Hilbert space) a Whitney-type extension theorem for the class $C^{1, \omega}$ holds true at least for some modulus $\omega$. It is also unknown whether a Whitney-type extension theorem holds true for every class $C^{1, \omega}(X)$ if $X$ is a Hilbert space and $\omega$ is not linear. However the results of this paper provide some answers to analogous questions for the classes $C^{1, \omega}_{\textrm{conv}}(X)$.

On the other hand, one could ask whether superreflexivity of $X$ is necessary in order to obtain Whitney-type extension theorems for the classes $C^{1, \omega}_{\textrm{conv}}(X)$, and wonder whether Banach spaces like $c_0$, with sufficiently many differentiable functions (and even with real-analytic equivalent norms), could admit such Whitney-type theorems. The following example answers this question in the negative.
\begin{example}
Let $X=c_0$ (the Banach space of all sequences of real numbers that converge to $0$, endowed with the sup norm).
Then for every modulus of continuity there are discrete sets $C\subset X$ and $1$-jets $(f,G)$ with $f:C\to\R$, $G:C\to X^{*}$ satisfying condition $(CW^{1, \omega})$ on $C$, and such that for no $F\in C^{1, \omega}_{\textrm{conv}}(X)$ do we have $F_{|_C}=f$ and $(\nabla F)_{|_C}=G$.
\end{example}
For simplicity we will only give the proof in the case that $\omega(t)=t^{\alpha}$, that is to say, for the classes $C^{1, \alpha}(X)$. In the general case the same proof works, with obvious changes.
Let $\{e_{j}\}_{j=1}^{\infty}$ be the canonical basis of $X$ (that is to say $e_1= (1, 0, 0, \ldots),$ $e_2=(0, 1, 0, \ldots)$, etc), and let $\{e_{j}^{*}\}_{j=1}^{\infty}$ be the associated coordinate functionals; thus we have that $\|e_j\|=1$, $e_{i}^{*}(e_j)=\delta_{ij}$, and $\|e_{j}^{*}\|_{*}=1$. 
Let
$$
C=\{\pm e_{j}: j\in\N\}\cup\{0\},
$$
and define $f:C\to\R$ and $G:C\to X^{*}$ by
$$
f(0)=0, f(\pm e_j)=\frac{1}{2} \textrm{ for all } j\in\N, \textrm{ and } G(0)=0, G(\pm e_j)=\pm e_{j}^{*} \textrm{ for all } j\in\N.
$$
It is easy to check that
$$
f(x)-f(y)-G(y)(x-y)\geq \frac{1}{2}= \frac{2^{1+\frac{1}{\alpha}}}{2^{2+\frac{1}{\alpha}}}\geq \frac{1}{2^{2+\frac{1}{\alpha}}}\|G(x)-G(y)\|_{*}^{1+\frac{1}{\alpha}} \textrm{ for all } x, y\in C, \: x \neq y,
$$
hence  $(f,G)$ satisfies property $(CW^{1, \alpha})$ on $C$. Assume now that there exists $F\in C^{1, \alpha}_{\textrm{conv}}(X)$ such that $(F, DF)$ extends the jet $(f,G)$.
If $\|x\|=1$ then, by taking $j\in\N$ such that $|x_j|=1$, we have, either with $y_j=e_j$ or with $y_j=-e_j$, that
$$
F(x)\geq F(y_j)+DF(y_j)(x-y_j)=\frac{1}{2}+|x_j|-1=\frac{1}{2},
$$
and by convexity it follows that $F(x)\geq 1/2$ for all $\|x\|\geq 1$, while $F(0)=0$. Then, by composing $F$ with a suitable real function, we may easily obtain a $C^{1, \alpha}$ function $\varphi:X\to [0,1]$ with $\varphi(0)=1$ and $\varphi(x)=0$ for all $\|x\|\geq 1$. But then again, using for instance \cite[Theorem V.3.2]{DGZ}, $X=c_0$ would be a superreflexive space, which is absurd. \qed

Observe also that the proof of Proposition \ref{necessityconditioncw1alpha} shows that $(CW^{1, \alpha})$ is a necessary condition for $C^{1, \alpha}_{\textrm{conv}}(X)$ extension. The above example shows that in the case that $X=c_0$ this condition is no longer sufficient, and therefore any characterization of the class of $1$-jets defined on subsets of $c_0$ which admit $C^{1, \alpha}_{\textrm{conv}}$ extensions to $c_0$ would have to involve some new conditions.

\section{Acknowledgements}
We are very grateful to Fedor Nazarov for pointing out to us that, even though the function $g$ in the statement of Theorem \ref{theoremformulaC11convex} is not differentiable in general, its convex envelope always is of class $C^{1,1}$. We also wish to thank the referee for several suggestions that improved the exposition.


\begin{thebibliography}{}

\bibitem{AMHilbert}
D. Azagra and C. Mudarra, {\em An Extension Theorem for convex functions of class $C^{1,1}$ on Hilbert spaces}, J. Math. Anal. Appl. 446 (2017), 1167-1182.

\bibitem{AM}
D. Azagra and C. Mudarra, {\em Whitney Extension Theorems for convex functions of the classes $C^1$ and $C^{1,\omega}$}, Proc. London Math. Soc. (3) 114 (2017), 133-158.

\bibitem{BrudnyiShvartsman}
Y. Brudnyi, P. Shvartsman, {\em Whitney's extension problem for multivariate $C^{1, \omega}$-functions}. Trans. Am. Math. Soc. 353 (2001), 2487--2512.

\bibitem{BorweinVanderwerffbook}
J. M. Borwein and J. D. Vanderwerff, {\em Convex Functions: Constructions, Characterizations and Counterexamples}, Encyclopedia of Mathematics and its Applications, Cambridge University Press, 2010.

\bibitem{CepedelloRegularization}
M. Cepedello, {\em On regularization in superreflexive Banach spaces by infimal convolution formulas}, Studia Math. 129 (1998), 265--284.

\bibitem{DaniilidisHaddouLeGruyerLey}
A. Daniilidis, M. Haddou, E. Le Gruyer and O. Ley, {\em Explicit formulas for $C^{1,1}$ Glaeser-Whitney extensions of 1-fields in Hilbert spaces}, preprint, arXiv:1706.01721

\bibitem{DGZ}
R. Deville, G. Godefroy, V. Zizler, {\em Smoothness and renormings in Banach spaces}. Pitman Monographs and Surveys in Pure and Applied Mathematics, 64. Longman Scientific \& Technical, Harlow; copublished in the United States with John Wiley \& Sons, Inc., New York, 1993.

\bibitem{DacorognaGangbo}
B. Dacorogna and W. Gangbo, {\em Extension theorems for a vector valued maps}, J. Math. Pure Appl. 85 (2006), 313--344.

\bibitem{FabianEtAl}
M. Fabian, P. Habala, P. H\'{a}jek, V. Montesinos, V. Zizler, {\em Banach space theory. The basis for linear and nonlinear analysis}. CMS Books in Mathematics. Springer, New York, 2011.

\bibitem{Fefferman2005}
C. Fefferman, {\em A sharp form of Whitney's extension theorem}. Ann. of Math. (2) 161 (2005), no. 1, 509--577.

\bibitem{Fefferman2006}
C. Fefferman, {\em Whitney's extension problem for $C^{m}$}. Ann. of Math. (2) 164 (2006), no. 1, 313--359.

\bibitem{FeffermanSurvey}
C. Fefferman, {\em Whitney's extension problems and interpolation of data}. Bull. Amer. Math. Soc. (N.S.) 46 (2009), no. 2, 207--220.

\bibitem{FeffermanIsraelLuli2} C. Fefferman, A. Israel, G.K Luli, {\em Interpolation of data by smooth nonnegative functions}, Rev. Mat. Iberoam. 33 (2017), no. 1, 305--324. 

\bibitem{FeffermanIsraelLuli1}
C. Fefferman, A. Israel, G.K Luli, {\em Finiteness principles for smooth selection}. Geom. Funct. Anal. 26 (2016), no. 2, 422--477.

\bibitem{Glaeser}
G. Glaeser, {\em Etudes de quelques alg\`ebres tayloriennes}, J. d'Analyse 6 (1958), 1-124.

\bibitem{GriewankRabier}
A. Griewank, P.J. Rabier, {\em On the smoothness of convex envelopes}. Trans. Amer. Math. Soc. 322 (1990) 691--709.

\bibitem{GrunbaumZarantonello}
F. Gr\"unbaun and E.H. Zarantonello, {\em On the extension of uniformly continuous mappings},  Michigan Math. J. 15 (1968), 65--74.

\bibitem{JSSG}
M. Jim\'enez-Sevilla, L. S\'anchez--Gonz\'alez, {\em On smooth extensions of vector-valued functions defined on closed subsets of Banach spaces}. Math. Ann. 355 (2013), no. 4, 1201--1219.

\bibitem{KirchheimKristensen}
B. Kirchheim, J. Kristensen, {\em Differentiability of convex envelopes}. C. R. Acad. Sci. Paris S\'er. I Math. 333 (2001), no. 8, 725--728.

\bibitem{LeGruyer1}
E. Le Gruyer, {\em Minimal Lipschitz extensions to differentiable functions defined on a Hilbert space}. Geom. Funct. Anal 19(4) (2009), 1101-1118.

\bibitem{LeGruyer2}
E. Le Gruyer, Thanh-Viet Phan, {\em Sup-Inf explicit formulas for minimal Lipschitz extensions for $1$-fields on $\Rn$}. J. Math. Anal. Appl. 424 (2015), 1161-1185.

\bibitem{Pisier}
G. Pisier, {\em Martingales with values in uniformly convex spaces}. Israel J. Math. 20 (1975), 236-350.

\bibitem{Rockafellar}
T. Rockafellar, {\em Convex Analysis}. Princeton Univ. Press, Princeton, NJ, 1970.

\bibitem{Stein}
E. Stein, {\em Singular integrals and differentiability properties of functions}. Princeton, University Press, 1970.

\bibitem{Valentine}
F.A. Valentine, {\em A Lipschitz condition preserving extension for a vector function}, Amer. J. Math., 67 No. 1 (1945), 83--93.

\bibitem{VossHirnMcCollum}
A. Herbert-Voss, M.J. Hirn, F. McCollum   {\em Computing minimal Interpolants in  $C^{1,1}(\mathbb{R}^d)$}, Rev. Mat. Iberoamericana 33 (2017), 29--66.

\bibitem{Vladimirov}
A. A. Vladimirov, J. E. Nesterov, and J. N. Cekanov, {\em Uniformly convex functionals} Vestnik Moskov. Univ. Ser. XV Vychisl. Mat. Kibernet., 3 (1978), 12-23.

\bibitem{Wells}
J.C. Wells, {\em  Differentiable functions on Banach spaces with Lipschitz derivatives}. J. Differential Geometry 8 (1973), 135--152.

\bibitem{Whitney}
H. Whitney, {\em Analytic extensions of differentiable functions defined in closed sets}, Trans. Amer. Math. Soc. 36 (1934), 63--89.

\bibitem{Zalinescuarticle}
C. Zalinescu, {\em On uniformly convex functions}, J. Math. Anal. Appl. 95 (1983), 344-374.

\bibitem{Zalinescubook}
C. Zalinescu, {\em Convex Analysis in General Vector Spaces}, World Scientific Publishing Co. Inc., River Edge, NJ, 2002.

\end{thebibliography}
\end{document}